\documentclass{article}
\usepackage{cmap}
\usepackage{amsthm}
\usepackage[cp1251]{inputenc}
\usepackage[T2A]{fontenc}
\usepackage[english]{babel}
\usepackage{amsfonts}
\usepackage{amssymb}
\usepackage{amsmath}
\usepackage{color}
\usepackage{hyperref}
\usepackage{mathdots}
\usepackage{cite}
\usepackage[numbers,sort&compress]{natbib}
\usepackage[left=4cm,right=4cm, top=5cm,bottom=5cm]{geometry}
\newtheorem{lemma}{Lemma}
\newtheorem{prop}{Proposition}
\newtheorem*{prop*}{Proposition}
\newtheorem{coroll}{Corollary}
\newtheorem{theorem}{Theorem}
\newtheorem*{theorem*}{Theorem}
\newtheorem*{Shmelkintheorem*}{Shmel'kin's theorem}
\theoremstyle{definition}
\newtheorem*{defin}{Definition}
\newtheorem{question}{Question}
\newtheorem*{example}{Example}
\theoremstyle{remark}
\newtheorem*{rem}{Remark}
\let\tilde\widetilde

\begin{document}

\title{On a generalization of Shmel'kin's theorem\footnote{
This work was supported by
the Russian Science Foundation, 
project no. 22-11-00075}}
\author{Mikhail A. Mikheenko
\\{\small
Faculty of Mechanics and Mathematics
of Lomonosov Moscow State University
}
\\
{\normalsize mamikheenko@mail.ru}
}
\date{}
\maketitle

\begin{abstract}
It is known that every nilpotent group
contains solution of every finite unimodular system of
equatiuons over itself. This statement, however, is not true for infinite systems.
Moreover, there are abelian groups which disprove the
infinite system analogue of the statement.
It has already been researched which periodic abelian groups
contain solutions of all infinite unimodular systems
of equations over themselves.
The present article covers the same question for periodic nilpotent groups
and for torsion-free nilpotent groups.
\end{abstract}

\section{Introduction}

Let us adopt the following convention on commutators:
$[g,h] = g^{-1} h^{-1} g h$.
Here we denote the field of order $p$ by $\mathbb Z_p$.
Let us also denote by $G^n$ the set $\left\{ x^n \mid x \in G \right\}$.
Denote the cyclic group of order $n$ with the generator
$g$ by $\langle g \rangle_n$.
The term ``neighbourhood'' here means open neighbourhood.

This article is devoted to solvability of equations over groups.

\begin{defin}
Let $G$ be a group.
An equation in variables $x_1,\ldots,x_n$
over $G$ is an expression (which may have coefficients from $G$)
$w(x_1,\ldots,x_n)=1$, where $w$ is an element of the free product $G*F(x_1,\ldots,x_n)$,
and $F(x_1,\ldots,x_n)$ is a free group with basis $x_1,\ldots,x_n$.

An equation $w(x_1,\ldots,x_n)=1$ is {\it solvable in the group}
$\tilde G$ if $G \subset \tilde G$
and $\tilde G$ contains a solution to said equation (i.e. there are elements
$\tilde g_1,\ldots,\tilde g_n \in \tilde G$ such that $w(\tilde g_1,\ldots, \tilde g_n)=1$).
This is the same as having a homomorphism $G*F(x_1,\ldots,x_n) \to \tilde G$,
which is injective on $G$ and maps $w$ to $1$.
In this case the group $\tilde G$ is called the {\it solution group}.

If an equation $w=1$ is solvable in some group $\tilde G$,
the equation $w=1$ over $G$ is called {\it solvable}.

Solvability of a (finite or infinite) system of equations 
(possibly in an infinite set of variables) is defined likewise.
\end{defin}

The study of equations over groups has a long history
yet remains an object of interest even now, see, for example,
\cite{B80, B84, K06, KP95, M24, Sh67, ABA21, BE18, EH91, EH21, EdJu00, 
GR62, How81, IK00, K93, KM23, KMR24, KT17, G83, Le62, NT22, P08, Sh81, T18, Kr85}.
There is also a survey on equations over groups \cite{Ro12}.

A lot of positive results on solvability
is known for nonsingular systems of equations over groups.

\begin{defin}
Let $\{w_i=1\}_{i \in I}$ be a system of equations
in a set of variables $\{x_j\}_{k \in J}$ over a group $G$.
For each equation $w_i$ consider a row
whose $j$th element is the exponent sum of $x_j$ in $w_i$.
For instance, the exponent sum of $y$
in an equation $x^2 y^{-3}g_1 xy^2 x y^2 x^{-1} y^{-2} g_2 = 1$
equals $-3 + 2 + 2 -2 = -1$.
The system of equations is called:
\begin{itemize}
\item {\it nonsingular} if said rows are linearly independent over $\mathbb Z$;
\item {\it $p$-nonsingular} for a prime number $p$ if the rows are linearly independent over $\mathbb Z_p$;
\item {\it $\pi$-nonsingular} for a set of (not necessarily all)
prime numbers if the system is nonsingular and $p$-nonsingular for every prime $p \in \pi$;
\item {\it unimodular} if the system is $p$-nonsingular for each prime number $p$.
\end{itemize}
\end{defin}
Note that by definition $\varnothing$-nonsingular systems are the same as
nonsingular systems.

One of the classical results on solvability of nonsingular systems
of equations is connected with finite groups.
\begin{theorem*}[\cite{GR62}]
Any nonsingular system of equations over a finite group
is solvable over this group.
\end{theorem*}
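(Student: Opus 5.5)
The plan is to follow the topological argument of Gerstenhaber and Rothaus, with the solution group taken to be a compact Lie group. First I reduce to a square system. Let the system $\{w_i=1\}_{i=1}^{k}$ in variables $x_1,\ldots,x_m$ have exponent-sum matrix $A$ of rank $k$ over $\mathbb Z$, so that $k\le m$. I pick $k$ columns giving a nonzero $k\times k$ minor and substitute $x_j=1$ for all other variables. The resulting square system is still nonsingular, and any solution of it gives a solution of the original system. So from now on assume $k=m$ and $\det A\neq 0$.

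Next I embed $G$ into a connected compact Lie group. The left regular representation embeds $G$ as permutation matrices in $U(N)$, where $N=|G|$. Consider the map $F\colon U(N)^m\to U(N)^m$ given by $F(u_1,\ldots,u_m)=(w_1(u),\ldots,w_m(u))$, with the coefficients of the $w_i$ interpreted through the embedding. Since $U(N)^m$ is a closed connected orientable manifold, it suffices to show $\deg F\neq 0$. A map of nonzero degree is surjective, so some $u$ satisfies $F(u)=(1,\ldots,1)$, and this $u$ solves the system in $U(N)$.

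The degree computation is the main step. Because $U(N)$ is connected, each coefficient $g\in G$ can be joined to $1$ by a path. Deforming all coefficients to $1$ therefore gives a homotopy from $F$ to the coefficient-free map $F_0(u)_i=\prod_j$ (a word in the $u_j$). To compute $\deg F_0$, I use the rational cohomology of $U(N)$, which is an exterior algebra on primitive generators. Group multiplication acts additively on primitive classes, and the $a$-th power map acts on them by multiplication by $a$. Hence on each primitive generator of degree $2r-1$, the map $F_0^*$ acts via the matrix $A^{T}$, however the letters in the word are ordered. It follows that $\deg F_0=(\det A)^{N}$, one factor for each of the $N$ primitive generators, and this is nonzero. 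I expect the justification that products and commutators induce on primitive classes only the abelianized (exponent-sum) action to be the most delicate point. I would argue it via the Hopf algebra structure: the comultiplication on a primitive class $x$ is $x\otimes1+1\otimes x$, so any word map pulls $x$ back to $\sum_j a_{ij}\,\mathrm{pr}_j^*x$. An alternative is to restrict to a maximal torus after a homotopy.

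Finally, we need a finite solution group. The subgroup $H\le U(N)$ generated by $G$ and the solution $u_1,\ldots,u_m$ is a finitely generated linear group. By Mal'cev's theorem $H$ is residually finite. Choose a normal subgroup of finite index in $H$ that avoids the finitely many nontrivial elements of $G$. The quotient by it is a finite group into which $G$ embeds, and the images of the $u_j$ still satisfy the system, so this finite group contains a solution.
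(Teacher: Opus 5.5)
The paper gives no proof of this statement; it only cites \cite{GR62}. Your proposal is a correct reconstruction of the Gerstenhaber--Rothaus argument itself: reduction to a square system, embedding $G$ into $U(N)$, homotopy to the coefficient-free word map, $\deg F_0=(\det A)^N$ via the primitive generators of $H^*(U(N);\mathbb Q)$, and Mal'cev's residual finiteness to get a finite solution group.

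One caveat on scope. Like \cite{GR62}, you treat only finite systems, whereas the paper's conventions allow infinite ones. Under that reading the statement still follows from your finite case by compactness:
\begin{itemize}
\item every finite subsystem of a nonsingular system is again nonsingular, so by your argument it is solvable in $U(N)$;
\item hence the closed solution sets in $U(N)^J$ have the finite intersection property, and Tychonoff's theorem gives a solution in $U(N)$.
\end{itemize}
Your final step, producing a finite solution group, does not carry over to infinite systems. For example, over $\langle a\rangle_2$ the system $x_1=a$, $x_nx_{n+1}^{-2}=1$ ($n\ge 1$) is nonsingular but has no solution in any finite group, since it would force $a$ to have odd order.
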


Another result on this topic is
concerned with locally indicable groups.
Recall that a {\it locally indicable} group is a group whose each nontrivial
finitely generated subgroup admits a surjective homomorphism onto $\mathbb Z$.
Note that this class of groups is, in some sense, an ``opposite''
of the class of finite groups.

\begin{theorem*}[\cite{How81}]
Any nonsingular system of equations over a locally indicable
group is solvable over this group.
\end{theorem*}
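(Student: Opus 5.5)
Howie's theorem is a classical cited result; the plan is to prove it by induction, reducing to a relative one-relator-type situation via a Magnus-style tower argument.

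\textbf{Reductions.} By a compactness argument, a system of equations over $G$ is solvable if and only if every finite subsystem is solvable over every finitely generated subgroup of $G$ containing its coefficients.
\begin{itemize}
\item One direction is clear.
\item For the other, one uses that solvability means $G$ embeds into $(G*F)/\langle\langle w_i\rangle\rangle$. An element of $G$ dying there dies already modulo finitely many relators, and those involve only finitely many coefficients.
\end{itemize}
Finitely generated subgroups of locally indicable groups are locally indicable, and nonsingularity passes to subsystems. So we may assume the system is finite: $n$ equations in $m\ge n$ variables, with $G$ finitely generated.

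Next I would make the system square. Choose variables whose exponent-sum columns form a nonsingular $n\times n$ minor, and set the remaining variables equal to $1$. Then apply a change of free basis of $F(x_1,\dots,x_n)$ realising row/column reduction over $\mathbb Z$ (Nielsen transformations act on the exponent matrix by $GL_n(\mathbb Z)$). After this the exponent matrix is nonsingular, and after further elementary operations it is upper triangular with nonzero diagonal entries.

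\textbf{Main step.} The goal is to show that $G\to H=(G*F_n)/\langle\langle w_1,\dots,w_n\rangle\rangle$ is injective, and moreover that $H$ is again locally indicable, so that an induction works. The natural route is Howie's tower method.
\begin{enumerate}
\item Map $G*F_n\to\mathbb Z^n$ by sending $G$ to $0$ and using the exponent sums. Consider the finite-index abelian cover corresponding to $\mathbb Z^n/M\mathbb Z^n$, where $M$ is the exponent matrix.
\item The relative presentation complex for $H$, built over a $K(G,1)$, is a relative $2$-complex. By a Howie tower argument any reduced relative spherical diagram can be lifted through a maximal tower of cyclic covers and subcomplex inclusions.
\item At the top of the tower the complex is a relative complex with fewer relators per unit of freedom. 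Local indicability of $G$ provides a surjection to $\mathbb Z$ on the relevant finitely generated subgroups, which allows induction on the number of relator cells.
\item The base case, one equation in one variable with nonzero exponent sum over a locally indicable group, is Brodskii–Howie's Freiheitssatz-type result. It is proved by the same tower argument, using that a one-relator product of locally indicable groups is locally indicable when the relator is not a proper power in the tower.
\end{enumerate}

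\textbf{Main obstacle.} The hard part is the asphericity/injectivity statement at the top of the tower. There one must show that a reduced relative diagram with boundary label in $G$ must be trivial. I would first try Howie's weight-test or ``locally indicable diagram'' argument. Choose a homomorphism of the relevant finitely generated subgroup onto $\mathbb Z$ that is nonzero on the relator's exponent data. Then derive a contradiction at the extremal vertices of the diagram, where the $\mathbb Z$-valued height is maximal or minimal, since nonsingularity forbids the cancellation needed there.
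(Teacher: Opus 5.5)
The paper gives no proof of this statement; it only cites Howie, so your sketch has to stand on its own. Your reductions are fine: compactness down to a finite subsystem over a finitely generated subgroup, and then a square system with $\det M\neq 0$. The gap is in the main step.

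\textbf{The inductive invariant is false.} You want $H=(G*F)/\langle\langle w_1,\dots,w_n\rangle\rangle$ to be locally indicable again, but it need not be. The single equation $x^2=1$ is nonsingular and gives $H=G*\mathbb Z_2$. More generally, for $G=1$ the group $H$ is finitely generated with abelianization of order $|\det M|$, so it is never indicable unless it is trivial. An example is the unimodular system $(st)^2s^{-3}=1$, $s^3t^{-5}=1$, whose relators are not proper powers and which gives the binary icosahedral group. So any scheme that feeds $H$ back into a locally indicable hypothesis breaks after one step. Avoiding proper powers does not rescue it.

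\textbf{Nonsingularity is never actually used.} Items 2--4 and your \emph{main obstacle} paragraph never say how nonsingularity of several relators enters. ``Fewer relators per unit of freedom'' is not defined, and it does not describe the top of a tower. Infinite cyclic covers multiply the relative cells, so there is no induction on the number of relator cells. The extremal-vertex weight test is a one-relator device, and you give no reason why a global rank condition on $M$ should forbid cancellation at a vertex. The finite cover by $\mathbb Z^n/M\mathbb Z^n$ plays no role.

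\textbf{The missing idea is homological.} Let $K$ be a $2$-complex with $\pi_1K=G$, and let $L=K\cup\{x_j\}\cup\{w_i\}$.
\begin{enumerate}
\item The relative boundary map $C_2(L,K)\to C_1(L,K)$ is the exponent-sum matrix. So nonsingularity says exactly that $H_2(L,K;\mathbb Z)=0$.
\item This condition survives passing to subcomplexes $L'$ with $K'=L'\cap K$: there are no $3$-cells, so $H_2=Z_2$ can only shrink.
\item It also survives infinite cyclic covers of compact pieces. The boundary matrix over $\mathbb Z[t^{\pm1}]$ specializes at $t=1$ to the old one, so a nonzero maximal minor stays nonzero.
\item Suppose $1\neq g\in G$ dies in $H$. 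Take a van Kampen disc $f\colon D\to L$ with $f(\partial D)\subset K$ representing $g$. Lift it through a maximal tower of subcomplex inclusions and infinite cyclic covers to a compact $L_k$.
\item Since $\pi_1D=1$, maximality gives $H_1(L_k;\mathbb Q)=0$. The exact sequence $0=H_2(L_k,K_k;\mathbb Q)\to H_1(K_k;\mathbb Q)\to H_1(L_k;\mathbb Q)$ then forces $H_1(K_k;\mathbb Q)=0$.
\item But the component of $K_k$ containing $f_k(\partial D)$ is compact. Its fundamental group maps onto a finitely generated subgroup of $G$ containing a conjugate of $g$. By local indicability that subgroup surjects onto $\mathbb Z$, so this component has nonzero rational $H_1$, a contradiction.
\end{enumerate}
This is where local indicability of $G$ and nonsingularity are both used. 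It needs neither local indicability of $H$ nor any diagrammatic cancellation argument.
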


The last theorem has also an analogue for prime numbers.
Recall that a {\it locally $p$-indicable} group is a group whose
each nontrivial
finitely generated subgroup admits a surjective homomorphism onto $\mathbb Z_p$.

\begin{theorem*}[\cite{G83}, see also \cite{Kr85}]
Let $p$ be a prime number.
Then any $p$-nonsingular system of equations over a locally $p$-indicable
group is solvable over this group.
\end{theorem*}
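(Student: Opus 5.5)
The plan is to reduce to finitely generated subgroups and then to finite $p$-groups, where $p$-nonsingularity can be exploited through the abelianization modulo $p$. The model is Howie's argument for locally indicable groups, with $\mathbb Z$ replaced by $\mathbb Z_p$.

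\textbf{Step 1: reduction to finite systems over finitely generated groups.} Let $G$ be locally $p$-indicable. By a compactness argument (the local theorem for the universal theory of groups), an arbitrary system is solvable over $G$ as soon as every finite subsystem is solvable over the finitely generated subgroup generated by its coefficients. Indeed, solvability of a system is equivalent to injectivity of $G$ into $(G*F)/\langle\langle w_i\rangle\rangle$, and any failure of injectivity is witnessed by finitely many relators. A finite subsystem of a $p$-nonsingular system is again $p$-nonsingular. So we may assume that $G$ is finitely generated and the system $w_1,\dots,w_m$ in $x_1,\dots,x_n$ is finite, with the exponent-sum matrix of rank $m$ over $\mathbb Z_p$. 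Adding equations $x_j=1$ for suitable variables, we may also assume $m=n$, so that the exponent matrix is invertible mod $p$.

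\textbf{Step 2: induction via indicability.} Here I would follow Howie's tower/induction scheme (or the Brodskii–Howie approach through Magnus-type induction). Since $G$ is finitely generated and nontrivial, there is a surjection $\varphi\colon G\to\mathbb Z_p$. Consider the kernel $K$ and the induced map of $G*F$ to $\mathbb Z_p$, extended to the variables by values chosen to kill the exponent sums. The group $(G*F)/\langle\langle w\rangle\rangle$ then decomposes over the corresponding index-$p$ subgroup. Its conjugates give a system over $K$ (a free product of $p$ copies twisted by the coset action), in $pn$ variables. The new system is still $p$-nonsingular, because its exponent matrix is a block circulant of the old one, whose determinant mod $p$ is $\det^p$ by the Frobenius identity. $K$ is again locally $p$-indicable but need not be smaller in an obvious sense.

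\textbf{Main obstacle: termination.} The induction must terminate, and I would measure progress by the complexity of the coefficients. Howie's trick is to use a one-relator style Freiheitssatz argument: the reduction is done inside $G_0=\langle$coefficients$\rangle$, and after passing to the kernel the coefficients of each equation involve fewer syllables or fall into a proper free factor. Alternatively, one can argue via residual properties. If the finitely generated $G$ were residually a finite $p$-group, we could map to finite $p$-groups $P$. For a finite $p$-group, a $p$-nonsingular system is solvable by induction on $|P|$: take a central subgroup of order $p$, solve mod it, and lift using the invertibility of the exponent matrix mod $p$, which handles the extension of the central $\mathbb Z_p$ linearly. Then assemble via an inverse-limit/compactness argument. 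Since locally $p$-indicable groups need not be residually $p$, the first route, the Howie–Brodskii tower argument, is what I would pursue. The hardest point I expect is establishing that the relative presentation after passing to the index-$p$ cover has strictly smaller complexity while preserving injectivity of the coefficient group.
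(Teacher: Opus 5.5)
The paper gives no proof of this statement; it is quoted from \cite{G83} and \cite{Kr85}. So your proposal has to stand on its own, and it has a genuine gap exactly where you flag it.

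What you do carry out is sound. Step 1 is fine: the passage from the coefficient subgroup $H$ to $G$ follows from $(G*F)/\langle\langle W\rangle\rangle\cong G*_H\bigl((H*F)/\langle\langle W\rangle\rangle\bigr)$ once $H$ embeds. The index-$p$ step is also a valid reduction. With $N=\ker(G*F\to\mathbb Z_p)=K*F_{pn}$ one has $G\cap\langle\langle W\rangle\rangle=K\cap\langle\langle W'\rangle\rangle_N$. The new exponent matrix reduces mod $p$ to the regular representation of the Fox matrix over the local ring $\mathbb Z_p[\mathbb Z_p]$, so its determinant is $\equiv(\det A)^p\pmod p$.

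But iterating this step is not an induction, because nothing decreases. $K$ is again an arbitrary finitely generated locally $p$-indicable group, and the system has grown: for $G=\mathbb Z$ you get $K\cong\mathbb Z$ with $pn$ unknowns, indefinitely. The descent terminates only for finite $p$-groups. Your proposed measure does not rescue it. Reidemeister--Schreier rewriting over a finite-index subgroup does not reduce the number of coefficient syllables. The Magnus-type shortening needs an infinite cyclic cover, in which a relator occupies only finitely many sheets.

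The residual route does prove the theorem for residually-$p$ groups, and needs no inverse limit. One finite $p$-quotient $P$ in which a given $g\neq 1$ survives, together with a solution of the image system inside $P$, already shows $g\notin\langle\langle W\rangle\rangle$. But it cannot cover the general case. The group $\langle a,t\mid t^{-1}a^2t=a^3\rangle$ is a torsion-free one-relator group, hence locally indicable and so locally $p$-indicable. Yet it is non-Hopfian and therefore not residually finite.

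The missing idea is to apply your covering step not to the whole group but to a finite witness; this is Howie's tower method.
\begin{itemize}
\item Let $X$ be a $2$-complex with $\pi_1(X)=G$. Let $L$ be obtained from $X$ by attaching a $1$-cell for each $x_j$ and a $2$-cell along each $w_i$.
\item $p$-nonsingularity says precisely that $H_2(L,X;\mathbb Z_p)=0$. This property passes to subcomplexes, since the relative chain complex is a subcomplex. It also passes to regular $\mathbb Z_p$-covers: filter $\mathbb Z_p[\mathbb Z_p]$ by powers of its augmentation ideal, which is the real content of your determinant computation.
\item Suppose $1\neq g\in G\cap\langle\langle W\rangle\rangle$. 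Take a van Kampen disc $D\to L$ whose boundary reads $g$.
\item Repeatedly replace the target by the image of $D$ in a connected $\mathbb Z_p$-cover of the current image; $D$ lifts because it is simply connected.
\item The number of cells strictly increases, since equality would give a section and force the cover to be trivial. It is bounded by the number of cells of $D$. So you reach a finite $L_N$ with $H_1(L_N;\mathbb Z_p)=0$.
\item The exact sequence of the pair then gives $H_1(K_N;\mathbb Z_p)=0$ for the preimage $K_N$ of $X$.
\item Now take the component of $K_N$ containing the lift of $\partial D$. Its fundamental group is finitely generated, and its image in $G$ is a finitely generated subgroup that is nontrivial, since it contains a conjugate of $g$.
\item By local $p$-indicability that image maps onto $\mathbb Z_p$. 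So this component has nonzero $H_1$ with $\mathbb Z_p$-coefficients, a contradiction.
\end{itemize}
The finiteness of $D$ is what gives termination and what lets local $p$-indicability be applied to finitely generated subgroups. Your global descent lacks exactly this.
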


Is it still remains unknown if there is a nonsingular system of equations over
some group which is not solvable.
Howie conjecture \cite{How81}, which states
that any nonsingular system of equations over any group is solvable,
still has yet to be either proved or disproved.

Apart from searching solutions of a system of equations in
any overgroup, one can try to find solutions in groups
similar to the initial group (in the best case, to find a solution
in the initial group itself).
The present paper is concerned with finding solutions of
nonsingular systems of equations in nilpotent groups.

The question is well-researched in the case of finite systems.

Recall that for a set of (not necessarily all) prime numbers $\pi$ a group $G$ is called
\begin{itemize}
\item {\it $\pi$-divisible} if for each prime $p \in \pi$ and each element $g \in G$
there is an element $h \in G$ such that $h^p = g$;
in particular any group is $\varnothing$-divisible;
\item {\it $\pi$-torsion free} if for each prime $p \in \pi$
the group $G$ has no elements of order $p$;
in particular, any group is a $\varnothing$-torsion free group.
\end{itemize}
Note that a divisible group is a $\Pi$-divisible groups,
while a torsion-free group is a $\Pi$-torsion free group,
where $\Pi$ is the set of all prime numbers.

There is a following theorem.
\begin{Shmelkintheorem*}[\cite{Sh67}]
Suppose that $G$ is a $\pi$-divisible
locally nilpotent
$\pi$-torsion free
group.

Then every finite
$\pi'$-nonsingular
system of equations over
$G$ is solvable in $G$.
Moreover, if there are as many equations in the system
as there are variables, then the solution is unique.
\end{Shmelkintheorem*}

Particularly, a divisible torsion-free nilpotent group contains
a solution of any finite nonsingular system of equations.
For finite unimodular systems the situation is even better,
as every nilpotent group contains a solution of any finite
unimodular system of equations over itself.

The last statement is not true for infinite systems:
abelian groups
$$
\mathbb Z_2 \oplus \mathbb Z_3 \oplus \mathbb Z_5 \oplus \ldots,
$$
$$
\mathbb Z_2 \oplus \mathbb Z_4 \oplus \mathbb Z_8 \oplus \ldots
$$
fail to have solutions to some infinite unimodular systems over themselves.
In \cite{M25} it was studied which periodic abelian groups
contain solutions of all infinite unimodular systems of equations over themselves.
\begin{theorem*}[\cite{M25}]
Suppose that $A$ is a periodic abelian group.
Then $A$ contains a solution of any unimoldular
system of equations over itself
if and only if
the reduced part of $A$ has bounded period.
\end{theorem*}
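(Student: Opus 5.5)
The proof has two directions. For sufficiency, write $A=D\oplus R$ with $D$ divisible and $R$ reduced, and solve the system separately in $D$ and in $R$. For necessity, when $R$ has unbounded period, build an explicit unimodular system with no solution in $A$.

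\emph{Reduction to linear algebra and sufficiency.} Since $A$ is abelian, an equation $w_i=1$ over $A$ is equivalent to a linear equation $\sum_j a_{ij}x_j=g_i$, where $(a_{ij})_j$ is the row of exponent sums. Let $M\subseteq\mathbb Z^{(J)}$ be the subgroup spanned by the rows $r_i$. Solving the system means extending the map $M\to A$, $r_i\mapsto g_i$, to a homomorphism $\mathbb Z^{(J)}\to A$; then $x_j$ is the image of $e_j$.
\begin{itemize}
\item Unimodularity implies nonsingularity, so the $r_i$ are a free basis of $M$ and the map $M\to A$ is well defined.
\item Unimodularity also makes $M$ pure in $\mathbb Z^{(J)}$. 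If $pv=\sum c_ir_i$, reducing mod $p$ gives $\sum \bar c_i\bar r_i=0$. Hence all $c_i\equiv 0 \pmod p$, so $v\in M$.
\end{itemize}
In $D$ the map extends because $D$ is injective. In $R$, let $nR=0$. The map factors through $M/nM\to(\mathbb Z/n)^{(J)}$, which is injective by purity and whose image is a pure submodule. A pure submodule of a bounded module is a direct summand (Kulikov--Prüfer), so the map $M/nM\to R$ extends. Adding the two solutions gives a solution in $A$.

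\emph{Necessity.} Suppose $R$ is unbounded. Either some primary component $R_p$ is unbounded, or infinitely many components $R_{p_k}$ are nonzero.

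In the first case, $R_p$ is reduced, so its basic subgroup $B$ is unbounded. Indeed, a bounded basic subgroup is a direct summand with divisible complement, and reducedness would then force $R_p=B$, which is bounded. This yields pure, independent cyclic subgroups $\langle c_k\rangle$ of orders $p^{n_k}$ with $n_k>2k$, and $\bigoplus\langle c_k\rangle$ is pure in $A$. Consider the system $x_k=p\,x_{k+1}+c_k$ for $k\ge 0$. It is unimodular: each row is $e_k-pe_{k+1}$, which is triangular with leading coefficient $1$. Any solution gives
\[
x_0-\sum_{k<K}p^kc_k\in p^KA \quad\text{for all } K.
\]
Project onto the finite direct summand $\bigoplus_{k<K}\langle c_k\rangle$. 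This projection maps $p^KA$ into $p^K\bigoplus\langle c_k\rangle$, so the $c_k$-coordinate of $x_0$ is $p^kc_k$ modulo $p^K\langle c_k\rangle$. Let $p^N$ be the order of $x_0$. Taking $k=N$ and $K>2N$ gives $p^{2N}c_N\in p^K\langle c_N\rangle$, which contradicts $n_N>2N$. So there is no solution.

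\emph{Main obstacle.} I expect the hardest step to be the case of infinitely many nonzero primary components. There I would take $c_k\in R_{p_k}$ of height $0$, which exists because $R_{p_k}$ is reduced and nonzero. I would try a system of the shape $x_k=q_kx_{k+1}+c_k$, with $q_k$ chosen as a product of earlier primes so that the rows stay unimodular. The goal is to force the $p_k$-component of $x_0$ to be nonzero for infinitely many $k$, which is impossible since $x_0$ has finite order. Choosing the coefficients $q_k$ so that this really produces a contradiction, rather than being absorbed by the tails $x_{K+1}$, is the delicate point. I have not yet found a choice of $q_k$ that does this.
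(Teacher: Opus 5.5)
\textbf{Gap: the infinitely-many-primes case of necessity is left open.} Your sufficiency argument is sound, and so is your treatment of an unbounded primary component $R_p$. One small fix there: $x_0$ need not lie in $A_p$, so let $p^N$ be the $p$-part of its order; the argument then goes through unchanged. For the nilpotent analogue (Lemma~\ref{ulmbad}) the paper instead passes to the first Ulm factor and uses separability with rapidly growing exponents; your basic subgroup and chain $x_k=px_{k+1}+c_k$ are a valid alternative. The remaining case is where infinitely many $R_{p_k}$ are nonzero, and there you give no argument, so the \emph{only if} direction is not proved. Your diagnosis of the difficulty is also off. Unimodularity is never the obstacle, since rows $e_k-q_ke_{k+1}$ are unimodular for \emph{any} integers $q_k$. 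Two things matter. In the $k$th equation, the coefficient in front of the unknown carrying the tail must be divisible by $p_k$, so the tail vanishes in $R_{p_k}/p_kR_{p_k}$. Meanwhile the coefficient accumulated in front of $c_k$ must stay prime to $p_k$. Products of earlier primes are prime to $p_k$, which is exactly why the tails can absorb $c_k$.

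\textbf{The fix.} Your choice $c_k\in R_{p_k}\setminus p_kR_{p_k}$ is right; it exists because a nonzero reduced $p_k$-group is not $p_k$-divisible. Combine it with the star-shaped system from the proof of Theorem~\ref{TheoremOne}: one common unknown $x$, private unknowns $y_k$, and equations $x-p_ky_k=c_k$. The paper uses $p_k^{m_k}$, the exponent of the first Ulm factor, and $c_k$ of finite height. Let $\pi_k$ be the projection onto the direct summand $R_{p_k}$ of $A$. Then $\pi_k(x)=c_k+p_k\pi_k(y_k)\notin p_kR_{p_k}$, so $\pi_k(x)\neq 0$ for every $k$. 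Hence $x$ has nonzero components in infinitely many primary parts, which is impossible for an element of finite order. The system is unimodular. Modulo a prime $q$, the unknown $y_k$ occurs only in row $k$, with coefficient $-p_k$, which is a unit unless $q=p_k$. So in a vanishing combination every coefficient is zero except possibly that of the single row with $p_k=q$. That one is then zero because of the entry $1$ in the $x$-column. Alternatively, your chain works with $q_k=p_k$: then $\pi_j(x_0)=p_0\cdots p_{j-1}c_j+p_0\cdots p_j\,\pi_j(x_{j+1})\notin p_jR_{p_j}$ for every $j$.
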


Some positive results for nilpotent groups,
partially extending Shmel'kin's theorem to
infinite systems,
were also achieved.
\begin{theorem*}[\cite{M25}]
Suppose that $G$ is a nilpotent group of bounded period.
Then any (not necessarily finite) unimodular system
of equations over $G$ is solvable in $G$ itself.
\end{theorem*}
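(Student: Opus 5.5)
We prove that a nilpotent group $G$ of bounded period contains a solution of every (possibly infinite) unimodular system over $G$. The plan is induction on the nilpotency class $c$ of $G$, with a compactness argument handling the infinitely many equations. The key structural fact is that a group of bounded period $n$ has only finitely many primes dividing the orders of its elements. So unimodularity matters only modulo finitely many primes, and each equation can be analysed through its image in abelian sections of exponent dividing $n$.

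\emph{Base case.} Let $c=1$, so $G=A$ is abelian of exponent $n$. Then $A$ is a $\mathbb Z/n\mathbb Z$-module, and by Prüfer--Baer it is a direct sum of cyclic groups $\bigoplus \mathbb Z_{p^k}$. Over $A$ the system becomes a linear system $Mx = b$ with integer matrix $M$ (rows are the exponent-sum vectors) and right-hand side $b\in A^I$. Each row has finite support.

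We work one primary component $A_p\cong(\mathbb Z/p^k)^{(\cdot)}$ at a time, of exponent $p^k$. Unimodularity means the rows are independent mod $p$. Since each row is a finite vector, this should be equivalent to the rows forming part of a basis of a free $\mathbb Z/p^k$-module. I would prove this by choosing, via Zorn's lemma or transfinite row reduction, a set of columns on which the reduced rows are "pivoted". Then the system can be solved by assigning the pivot variables, with the remaining variables set to $0$. This is the place where bounded period is essential: the exponent is uniformly $p^k$, so no infinite descent of $p$-powers occurs.

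\emph{Inductive step.} Let $Z=\gamma_c(G)$, the last nontrivial term of the lower central series. It is central and abelian of bounded period. By induction, the image system over $G/Z$ has a solution $\bar x_j$. Choose lifts $x_j=g_j z_j$ with $z_j\in Z$ unknown. Substituting gives $w_i(g_j z_j)=w_i(g_j)\prod_j z_j^{e_{ij}}$, since $Z$ is central. Here $w_i(g_j)\in Z$ because the image system is solved. The remaining conditions therefore form the linear system $\prod_j z_j^{e_{ij}}=w_i(g_j)^{-1}$ in the abelian group $Z$ of bounded period. It has the same exponent-sum matrix, hence is unimodular, and the base case solves it.

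\emph{Main obstacle.} The hard step is the base case: solving an infinite unimodular linear system over a bounded-exponent abelian $p$-group. Here "independent mod $p$" must be turned into solvability without any finiteness of the system. I would first try to reduce the system modulo $p$ to echelon form transfinitely, well-ordering the equations. Each new row, being independent mod $p$ of the earlier ones, has a variable whose coefficient is invertible modulo the span of the earlier rows. We then solve successively, or Hensel-lift through the $k$ layers $p^iA/p^{i+1}A$.

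If the transfinite pivoting fails (pivots may be reused), the fallback is compactness. Each finite subsystem is solvable by Shmel'kin's theorem for nilpotent groups. The solution sets are cosets in $A^J$, which is compact in the product topology only when $A$ is finite. So this fallback would itself require bounded period together with a further reduction, such as passing to pure subgroups and using that bounded groups are algebraically compact.
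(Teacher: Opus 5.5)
\paragraph{Where the proposal stands.} Your reduction from the nilpotent case to the abelian one is correct. It is the same mechanism the paper uses in Lemma \ref{ulmgood} and in the proof of Theorem \ref{TheoremTwo}: solve the image system in $G/\gamma_c(G)$, lift, and use centrality to get a linear system over $\gamma_c(G)$ with the same exponent-sum matrix. The splitting into finitely many primary components is also fine. The paper only quotes this statement from \cite{M25}. In its own arguments of this kind, though, the abelian case is the essential input.

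That abelian case is exactly what your proposal does not supply, so there is a genuine gap. You say the rows ``should'' form part of a basis, suggest transfinite pivoting, and then allow that it may fail. As you suspect, row-by-row pivoting does go wrong in general. Take the rows $e_n-e_{n+1}$ ($n\ge 1$) and use the leading entry of row $n$ as its pivot. Then every column is a pivot, nothing can be set to $0$, and back-substitution in $x_n=b_n+x_{n+1}$ never starts. Yet the system is solvable once column $1$ is declared free. The free columns have to be chosen globally, not row by row.

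\paragraph{The missing step.}
\begin{itemize}
\item Reduce mod $p$. The rows $\bar r_i\in\mathbb Z_p^{(J)}$ are linearly independent. By Zorn's lemma (extend an independent set to a basis using vectors of the spanning set $\{e_j\}$) there is $J_1\subseteq J$ such that $\{\bar r_i\}\cup\{e_j\}_{j\in J_1}$ is a basis of $\mathbb Z_p^{(J)}$.
\item Let $R=\mathbb Z/p^k\mathbb Z$ and $F=R^{(J)}$. Then $\{r_i\}\cup\{e_j\}_{j\in J_1}$ is a basis of $F$. For spanning: if $N$ is its span, $F=N+pF$ gives $F=N+p^kF=N$. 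For independence: take a nontrivial relation whose coefficients have minimal $p$-valuation $t<k$. Since $\{v\in F : p^tv=0\}=p^{k-t}F\subseteq pF$, it yields a nontrivial relation mod $p$, a contradiction.
\item So $r_i\mapsto b_i$ and $e_j\mapsto 0$ ($j\in J_1$) define an $R$-linear map $F\to A_p$. The images of the $e_j$ solve the system.
\end{itemize}
This is precisely where $p^k=0$, i.e.\ bounded period, is used.

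\paragraph{Other routes that would also work.} You can instead solve over the $\mathbb Z_p$-vector spaces $p^iA_p/p^{i+1}A_p$ (this needs only the Zorn step) and correct layer by layer, as in the proof of Lemma \ref{torsionfree}. Here the process stops after $k$ steps, so no completeness is needed.

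Your compactness fallback would also finish at once, without any further reduction. Bounded groups are algebraically compact. That means exactly that a system of linear equations over $A$, each equation in finitely many unknowns, is solvable as soon as every finite subsystem is. Finite unimodular systems are solvable by Smith normal form.

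\paragraph{A minor correction.} $A_p$ need not be free over $\mathbb Z/p^k\mathbb Z$, as you wrote. Only the module of unknowns is free, and that is all the argument uses.
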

\begin{theorem*}[\cite{M25}]
Suppose that $G$ is a divisible nilpotent group.
Then any (not necessarily finite) nonsingular
system of equations over $G$
has a solution in $G$ itself.
\end{theorem*}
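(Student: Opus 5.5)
The plan is induction on the nilpotency class $c$ of $G$, using the lower central series $G=\gamma_1(G)\supseteq\gamma_2(G)\supseteq\ldots\supseteq\gamma_{c+1}(G)=1$. Every equation of the system is adjusted by central elements chosen from the last nontrivial term $\gamma_c(G)$. At each step the problem therefore reduces to a \emph{linear} system over a divisible abelian group, with the same integer exponent-sum matrix as the original system.

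\textbf{Abelian case ($c=1$).} Here $G=D$ is a divisible abelian group, hence an injective $\mathbb Z$-module. Write each equation as
$$
\sum_{j} a_{ij}x_j = -b_i,
$$
where $(a_{ij})_j$ is the $i$th row of exponent sums and $b_i\in D$ is the sum of the coefficients of the $i$th equation. Let $R\subseteq \mathbb Z^{(J)}$ be the subgroup generated by the rows. The system is nonsingular, so the rows are linearly independent over $\mathbb Z$ and form a free basis of $R$. Hence the assignment $\text{row}_i\mapsto -b_i$ extends to a well-defined homomorphism $R\to D$. By injectivity of $D$ it extends to a homomorphism $\varphi\colon\mathbb Z^{(J)}\to D$. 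Setting $x_j=\varphi(e_j)$ then solves every equation simultaneously. No finiteness of the system is needed here.

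\textbf{Divisibility of the central term.} Before the inductive step I will show that $\gamma_c(G)$ is a divisible abelian subgroup of the centre, and that $G/\gamma_c(G)$ is divisible nilpotent of class $c-1$. The quotient part is immediate. For $\gamma_c(G)$, fix $h\in\gamma_{c-1}(G)$. Since $[G,h]\subseteq\gamma_c(G)$ is central, the map $g\mapsto[g,h]$ is a homomorphism $G\to\gamma_c(G)$. Given $n$, write $g=g_0^n$ using divisibility of $G$; then $[g,h]=[g_0,h]^n$. The group $\gamma_c(G)$ is abelian and generated by such commutators, and a product of $n$th powers in an abelian group is an $n$th power. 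Hence $\gamma_c(G)$ is divisible. I expect this to be the main point requiring care, since $G$ need not be torsion-free and the upper central series is not obviously usable. The lower central series avoids that issue.

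\textbf{Inductive step.} Map the system to $\bar G=G/\gamma_c(G)$. Its exponent-sum matrix is unchanged, so it is still nonsingular. By induction it has a solution $\bar x_j$; choose arbitrary lifts $\tilde x_j\in G$. Then $w_i(\tilde x)\in\gamma_c(G)$ for every $i$. Now look for a solution of the form $x_j=\tilde x_jz_j$ with $z_j\in\gamma_c(G)$. Because the $z_j$ are central,
$$
w_i(\tilde x_jz_j)=w_i(\tilde x)\prod_j z_j^{a_{ij}}.
$$
So we must solve $\prod_j z_j^{a_{ij}}=w_i(\tilde x)^{-1}$ in the divisible abelian group $\gamma_c(G)$. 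This is a system with the same nonsingular matrix, and the abelian case solves it. This completes the induction, and the whole argument works for infinite systems in infinitely many variables without any compactness argument.
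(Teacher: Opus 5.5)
Your proof is correct, and it is essentially the argument the paper uses. The paper only cites this statement from \cite{M25}, but its proof of Lemma~\ref{ulmgood} follows exactly your scheme: induction through $G/\gamma_s(G)$, central correction of a lifted solution to obtain a linear system over $\gamma_s(G)$ with the same exponent-sum matrix, and divisibility of $(\gamma_s(G))^m$ via the homomorphism $g\mapsto[g,h]$, which for divisible $G$ is your computation with $m=1$. Your injectivity argument for the abelian base case supplies a step that the paper takes for granted from \cite{M25}.
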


However, the question remained unanswered for
torsion-free abelian groups,
as well as it remained unclear how to
extend the criterion from abelian groups
to nilpotent ones.

The following results provide answers to these questions.
\begin{theorem}\label{TheoremOne}
Suppose that $G$ is a periodic nilpotent group.

Then $G$ contains a solution of any unimodular system of
equations over itself
if and only if
only a finite number of $p$-components of
$G$ has non-trivial first Ulm factor and
all these factors have bounded period.
\end{theorem}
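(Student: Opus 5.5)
We decompose the periodic nilpotent group as $G=\prod_p G_p$, the restricted direct product of its $p$-components (Sylow subgroups), each of which is nilpotent. For a $p$-group $P$, the first Ulm factor is $P/P^{(1)}$, where $P^{(1)}=\bigcap_n P^{p^n}$; in the nilpotent case $P^{(1)}$ is the intersection of the subgroups generated by $p^n$-th powers. The plan is to reduce both directions to the abelian criterion of \cite{M25} and to the bounded-period theorem of \cite{M25}.

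\emph{Necessity.} Assume $G$ contains a solution of every unimodular system over itself. We first show that this property passes to quotients by characteristic subgroups $N$. Given a system over $G/N$, lift the coefficients to $G$, solve the lifted system in $G$ (it has the same exponent matrix, so it is still unimodular), and project. Now take $N=[G,G]\cdot\prod_p G_p^{(1)}$, or more simply pass to $G^{ab}$. Since $G_p$ is nilpotent, $G_p/G_p^{(1)}$ has bounded period exactly when its abelianization $G_p/[G_p,G_p]G_p^{(1)}$ does; this is the standard fact that a nilpotent group generated by elements of bounded order, with abelianization of bounded exponent, has bounded exponent. Then $G^{ab}$ is a periodic abelian group that has the solution property, so by the theorem of \cite{M25} its reduced part has bounded period. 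The reduced part of $\bigoplus_p A_p$ is $\bigoplus_p (\text{reduced part of } A_p)$. Bounded period therefore forces all but finitely many $A_p$ to be divisible, which means their first Ulm factor $A_p/A_p^{(1)}$ is trivial, and it forces the remaining ones to have bounded period. We then transfer this back to $G_p$: if $G_p^{ab}$ is divisible, then $G_p=[G_p,G_p]G_p^{p^n}$ for every $n$, and the nilpotency of $G_p$ gives $G_p=G_p^{p^n}$ (the usual argument that $G=G'H$ implies $G=H$). Hence the first Ulm factor of $G_p$ is trivial, and bounded exponent lifts in the same way.

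\emph{Sufficiency.} Let $S$ be the finite set of primes where $G_p/G_p^{(1)}$ is nontrivial, and set $D=\prod_{p\notin S} G_p\cdot\prod_{p\in S}G_p^{(1)}$. The key claim is that $D$ is a divisible nilpotent normal subgroup. For $p\notin S$, $G_p$ equals its $p^n$-th power subgroups for all $n$; it is $q$-divisible for $q\ne p$ because it is a $p$-group, and nilpotency upgrades ``generated by $p$-th powers'' to ``every element is a $p$-th power'', which is the delicate point. For $p\in S$, we must show that $G_p^{(1)}$ itself is divisible. Since $G_p/G_p^{(1)}$ has bounded exponent $p^k$, we have $G_p^{(1)}=G_p^{p^k}$, and we then need $G_p^{p^{k+1}}\supseteq G_p^{(1)}$. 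We then solve in two stages. The quotient $G/D\cong\prod_{p\in S}G_p/G_p^{(1)}$ is nilpotent of bounded period, so by \cite{M25} the system has a solution $\bar x_j$ modulo $D$. Choosing lifts $x_j=y_jg_j$ turns the system into one in the variables $y_j$ whose values are forced to lie in $D$. We then descend through the central series of $G$ intersected with $D$ (or through the upper central series of $D$). At each abelian layer, which is a divisible abelian group, we solve a linear system whose matrix is the exponent matrix. Over a divisible abelian group, a nonsingular system has a solution; this is the abelian case of the divisible theorem of \cite{M25}, using injectivity of divisible groups.

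\emph{Main obstacle.} The hardest step is the descent. The divisible theorem of \cite{M25} is stated for equations over $D$, not for a twisted system with coefficients from $G$ acting on $D$. We will therefore rerun its layer-by-layer argument, using the fact that conjugation by $G$ acts on each factor $\gamma_i(G)\cap D/\gamma_{i+1}(G)\cap D$ trivially enough that the linearized equation has the exponent-sum matrix. The second difficulty is establishing divisibility of $G_p^{(1)}$ in the nilpotent setting. If this is awkward, we plan to replace $G_p^{(1)}$ by the maximal divisible subgroup of $G_p$ and prove that the quotient by it has bounded period.
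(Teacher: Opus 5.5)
Your route differs from the paper's: you pass the property to $G^{ab}$ and invoke the abelian criterion, then solve modulo $D$ and afterwards inside $D$. It can be completed, but as written both directions have a gap exactly where nilpotency enters.

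\emph{Necessity.} Inheritance by quotients and the divisible case are fine, so you do get that all but finitely many $G_p$ have trivial first Ulm factor. The sentence ``bounded exponent lifts in the same way'' is not justified, however. The abelian criterion only gives $G_p^{ab}=D_p\oplus R_p$ with $D_p$ divisible and $p^kR_p=0$. Your reduction needs $G_p/G_p'G_p^{(1)}$ to be bounded. That group is $G_p^{ab}$ modulo the image of $G_p^{(1)}$, which lies in $D_p$ but might a priori be a proper subgroup of it, leaving a nonzero divisible quotient. The argument $G=G'H\Rightarrow G=H$ only handles the case where the whole abelianization is divisible: here $\langle G_p^{p^k}\rangle$ is merely contained in $G_p'\langle G_p^{p^{k+n}}\rangle$, and $G_p'$ is not the derived subgroup of $\langle G_p^{p^k}\rangle$. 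An additional lemma is required. For example, $\gamma_i(G_p)/\gamma_{i+1}(G_p)$ is an image of $(G_p^{ab})^{\otimes i}$ and $D_p\otimes T=0$ for torsion $T$, so $\gamma_2(G_p)$ has bounded exponent $p^m$. For $x$ in the preimage of $D_p$, write $x=w^{p^n}c$ with $w$ in that preimage and $c\in\gamma_2(G_p)$. The Hall--Petrescu formula then gives $x^{p^N}=w^{p^{n+N}}$ with $N$ depending only on $m$ and the class, so every $g^{p^{k+N}}$ has infinite height. The paper sidesteps this by exhibiting explicit unsolvable unimodular systems: Lemma~\ref{ulmbad}, by induction on the class via $G/Z(G)$ and $Z(G)$, plus a second system for infinitely many nontrivial Ulm factors.

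\emph{Sufficiency.} Solving modulo $D$ by the bounded-period theorem is fine. On the filtration $D\cap\gamma_i(G)$, conjugation by $G$ acts trivially on each layer, so the ``twist'' is not the real obstacle. The false claim is that every layer is divisible; only the top layer $D/(D\cap\gamma_2(G))$ is guaranteed to be. Take the central product of $\mathbb{Z}_{p^\infty}$ with an extraspecial group of order $p^3$, identifying the subgroup of order $p$ with the centre. There $D=G_0\cong\mathbb{Z}_{p^\infty}$ but $D\cap\gamma_2(G)\cong\mathbb{Z}_p$, where ``nonsingular over a divisible group'' does not apply. The same lemma repairs this: $\gamma_2(G)$ has bounded exponent, so the lower layers are bounded, and there you must use unimodularity. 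Alternatively, observe that $D$ is abelian and central. A divisible periodic nilpotent group is abelian since $D^{ab}\otimes D^{ab}=0$. For $g$ of order $n$, the endomorphism $\phi\colon x\mapsto[x,g]$ of $D$ is nilpotent and $(1+\phi)^n=1$. On $\phi^{j-1}(D)$, with $j$ maximal such that $\phi^j\neq0$, this reads $n\phi^j(D)=0$, which is impossible for a nonzero divisible group. Hence a single linear step over $D$ finishes. Your ``second difficulty'' is immediate: $G_p^{(1)}=G_p^{p^k}=G_p^{p^{k+1}}$ as sets, which is Lemma~\ref{fullhi}. The paper instead splits $G$ as a divisible nilpotent group times finitely many $G_p$. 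It treats each such $G_p$ (Lemma~\ref{ulmgood}) by induction on the class, solving modulo $\gamma_s(G_p)$ and using that $\gamma_s(G_p)^m$ is divisible.
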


\begin{theorem}\label{TheoremTwo}
The following conditions on a nilpotent
torsion-free group $G$ are equivalent:
\begin{itemize}
\item $G$ contains a solution of any unimodular system of equations over itself.
\item $G$ is Raikov complete with respect to the group topology
in which the basic neighbourhoods of $1$ are $G^{n!}$.
\end{itemize}
\end{theorem}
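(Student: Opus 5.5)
The plan is to prove both implications by reducing an infinite unimodular system to a coherent family of finite ones via the topology, and then passing to a limit. Write $\tau$ for the group topology on $G$ with basic neighbourhoods $G^{n!}$. First I would check that $\tau$ really is a Hausdorff group topology. The sets $G^{n!}$ need not be subgroups, so I would replace them by the subgroups they generate, or use the fact that in a torsion-free nilpotent group $\langle G^{m}\rangle$ and $G^{m}$ are interleaved: $\langle G^{m^c}\rangle\subseteq G^{m}$ for $c$ depending on the class. This follows from the Hall--Petresco identities and the Mal'cev completion. Hausdorffness comes from $\bigcap_n G^{n!}=1$ for a finitely generated relevant piece; if this fails globally I would instead work in $G/\bigcap_n G^{n!}$, and I expect the argument to show the intersection is trivial exactly when needed.

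\emph{Completeness implies solvability.} Take a unimodular system $\{w_i=1\}_{i\in I}$ over $G$. For each $n$ I reduce modulo the normal subgroup $N_n=\langle G^{n!}\rangle$. The quotient $G/N_n$ is a nilpotent group of bounded period, so by the theorem of \cite{M25} on nilpotent groups of bounded period the image system has a solution in $G/N_n$. The key difficulty is \emph{coherence}: the solutions for different $n$ must be compatible, so that they form a Cauchy net. I would get this by a compactness/inverse-limit argument. For a finite unimodular subsystem, Shmel'kin's theorem (with the system extended to a square one) gives uniqueness modulo suitable terms in each $p$-group. So I would arrange the system to be square after adding variables and equations, keeping it unimodular, which is possible because the exponent matrix rows extend to a unimodular basis. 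Then the solution modulo $N_n$ is unique, and uniqueness forces compatibility under the projections $G/N_{n+1}\to G/N_n$.

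Lifting representatives then gives, for each variable, a $\tau$-Cauchy sequence. Raikov completeness yields limits, and continuity of the words in $\tau$ (multiplication is continuous) shows that the limits solve every $w_i=1$, using $\bigcap N_n=1$.

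\emph{Solvability implies completeness.} Given a Cauchy sequence $(g_k)$, I would choose it so that $g_k^{-1}g_{k+1}\in G^{(k)!}$, say $g_{k+1}=g_k h_k^{(k)!}$. I would then write the infinite unimodular system
\[
x_k = h_k^{(k)!}\, x_{k+1}, \qquad k\ge 1,
\]
in the variables $x_k$, with its triangular structure adjusted so that it is unimodular. The coefficient rows $e_k-e_{k+1}$ are independent modulo every $p$. A solution $x_k$ satisfies $x_1=g_{k+1}^{-1}g_1x_{k+1}$ up to rearrangement. To force $x_{k+1}\to 1$, I would add variables $y_k$ with $x_{k+1}=y_k^{(k)!}$; these are not unimodular as they stand, so they must be encoded through a unimodular change such as $x_{k+1}=y_k^{(k)!}\,z_k$ with extra unimodular constraints. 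This gives $x_1^{-1}$-related limits of $g_k$, which proves completeness.

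The main obstacle is this encoding: making divisibility constraints expressible unimodularly, and, in the first part, guaranteeing uniqueness of the solutions modulo $N_n$ so that they are compatible.
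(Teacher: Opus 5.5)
Both directions have a genuine gap, and in each case it is the step you flag yourself as the main obstacle.

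\emph{Completeness $\Rightarrow$ solvability.} Solving the system in each $G/N_n$ by the bounded-period theorem is fine. The coherence mechanism, however, does not work.

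Shmel'kin's uniqueness concerns \emph{finite} square systems. An infinite unimodular system cannot in general be enlarged so that its exponent rows form a basis. For example, the rows $e_n-(n+1)e_{n+1}$ of $\mathbb Z^{(\mathbb N)}$ are independent modulo every prime, yet they span a subgroup $R$ with $\mathbb Z^{(\mathbb N)}/R\cong\mathbb Q$. Since $\mathbb Q$ is not free, $R$ is not a direct summand. These are exactly the rows of the system needed in the converse direction.

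Without such a completion, uniqueness fails even in trivial cases. The unimodular system $x_n-2x_{n+1}=0$ $(n\ge1)$ has the two solutions $(0,0,0,\ldots)$ and $(2,4,2,4,\ldots)$ in $\mathbb Z/6\mathbb Z$, which is $G/N_3$ for $G=\mathbb Z$. Since $G/N_n$ is in general infinite, a compactness argument is not available either.

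The missing idea is liftability rather than uniqueness. For abelian $A$, suppose $x$ satisfies $m_i(x)\equiv a_i \pmod{n!A}$. Solve the residual system with unknowns in $n!A\cong A$ modulo $(n+1)\cdot n!A$; this is possible because the quotient has bounded period. Adding the correction gives approximations that are Cauchy by construction.

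The paper does this only for abelian groups and reaches nilpotent $G$ by induction on the class. $Z(G)$ and $G/Z(G)$ inherit completeness, and after solving modulo $Z(G)$ the problem becomes a unimodular system over the abelian group $Z(G)$.

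Also, $\bigcap_n N_n$ is not trivial ``exactly when needed''. $G=\mathbb Q$ satisfies both conditions of the theorem, but its topology is indiscrete. So your final step, from $w_i\in N_n$ for all $n$ to $w_i=1$, requires splitting off the divisible part, as the paper does in the abelian case.

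\emph{Solvability $\Rightarrow$ completeness.} Your system $x_kx_{k+1}^{-1}=h_k^{k!}$ is always solvable (take $x_k=g_k^{-1}$), so it carries no information. The encoding of $x_{k+1}\to1$ is left open.

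The trick is to put the factorial growth on the variables rather than on the constants. First reduce to the centre. If $G/Z(G)$ is not complete, use induction. Otherwise, translate a non-convergent Cauchy sequence so that its image in $G/Z(G)$ tends to $1$, and pass to a non-convergent Cauchy sequence $z_n\in Z(G)$ with $z_nz_{n+1}^{-1}=b_n^{n!}$. Here $b_n\in Z(G)$ because $G^{n!}\cap Z(G)=Z(G)^{n!}$ in a torsion-free nilpotent group. Now take
\[
x_n x_{n+1}^{-(n+1)} = b_n, \qquad n\ge 1.
\]
Its exponent matrix is triangular with ones on the diagonal, hence unimodular. Because the $b_n$ are central, every solution satisfies $x_1x_{n+1}^{-(n+1)!}=b_1b_2^{2}\cdots b_n^{n!}=z_1z_{n+1}^{-1}$. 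Since $x_{n+1}^{(n+1)!}\in G^{(n+1)!}$, this forces $z_n\to x_1^{-1}z_1$, a contradiction.

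Centrality is essential here. With non-central coefficients such as your $h_k$, one has $(c\,x^{k+1})^{k!}\neq c^{k!}x^{(k+1)!}$ in general, and the telescoping breaks down.
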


Note that such groups topology is first-countable, hence
the completeness of $G$ can be viewed in pseudometric sense, that is
having a limit of any Cauchy sequence, Cauchy sequence being in that case
a sequence which stabilizes modulo $G^{n!}$ for any $n$.

Particularly, one can derive the following fact from Theorem \ref{TheoremTwo}.
\begin{coroll}\label{CountableTF}
A countable torsion-free nilpotent group $G$
contains a solution of any unimodular system of equations over itself
if and only if
it is divisible.
\end{coroll}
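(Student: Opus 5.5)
Corollary \ref{CountableTF} is deduced from Theorem \ref{TheoremTwo}: it suffices to show that a countable torsion-free nilpotent group $G$ is Raikov complete in the topology with basic neighbourhoods $G^{n!}$ if and only if it is divisible.

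\emph{Divisible implies solvability.} If $G$ is divisible, then $G^{n!}=G$ for all $n$, so the topology is indiscrete. Every net then converges to every point, and $G$ is trivially complete; Theorem \ref{TheoremTwo} applies. Alternatively, one can invoke directly the earlier result of \cite{M25} that divisible nilpotent groups contain solutions of all nonsingular systems, and unimodular systems are in particular nonsingular.

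\emph{Complete implies divisible.} Assume $G$ is countable and complete, so every Cauchy sequence has a limit. The idea is a Baire-type, or diagonal, argument. Fix $g\in G$ and a prime $p$; we want an $h$ with $h^p=g$. The plan is to build a Cauchy sequence whose limit is forced to be a $p$-th root. First I would check that each $G^{n!}$ is a subgroup of finite or countable index; in a nilpotent group, the subgroup generated by $G^{m}$ contains $G^{m'}$ for a suitable $m'$, by standard nilpotent power arguments. Next, the neighbourhood base is countable, so completeness is equivalent to sequential completeness. Then, using countability, one should show that completeness forces the neighbourhood filter to be ``thin'' unless the topology is indiscrete. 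Enumerate $G=\{g_1,g_2,\dots\}$. If $\bigcap_n \langle G^{n!}\rangle$ were a proper subgroup, i.e.\ some element is not divisible in the relevant sense, I would construct a sequence $x_k=a_1a_2\cdots a_k$ with $a_k\in G^{k!}$. The $a_k$ are chosen diagonally so that $x_k$ avoids $g_k\langle G^{m_k}\rangle$ for a suitable $m_k$; this is possible when the quotients $G/\langle G^{m}\rangle$ grow nontrivially, which they do when $G$ is not divisible. Such a sequence is Cauchy, but a limit $x$ would equal some $g_k$, contradicting the construction modulo $G^{m_k}$. Hence $G^{n!}$ is everything for each $n$, and in particular every element has an $n!$-th root. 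For torsion-free nilpotent groups this gives divisibility by each prime, since $p\mid n!$ for large $n$, and roots are unique so no further subtlety arises.

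The main obstacle is the diagonal step. One must guarantee at each stage that there are at least two distinct cosets of $G^{m_k}$ inside $x_{k-1}G^{k!}$, so that the sequence can dodge $g_k$. This requires knowing that non-divisibility propagates: if $G\neq G^{p}$, then $G^{p^j k!}$ is strictly smaller than $G^{k!}$ modulo deeper terms. I would handle this by passing to the abelianization $G/[G,G]$, or to the lower central factors, where the statement reduces to abelian groups. There I would use the fact that a torsion-free nilpotent group is divisible iff its abelianization is divisible (a standard Mal'cev-type fact). For a countable non-divisible abelian group, the $n!$-adic completion is then uncountable, which yields the desired Cauchy sequence without a limit, and this sequence can be lifted to $G$.
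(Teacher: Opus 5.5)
\textbf{Verdict: there is a genuine gap in the direction ``complete $\Rightarrow$ divisible''.}

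The direction ``divisible $\Rightarrow$ solvable'' is fine. In the other direction, your diagonal construction is a correct hand-made substitute for the Baire category step the paper uses. The gap is in what that construction needs as input.

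Dodging $g_k$ at every stage requires the chain $\langle G^{n!}\rangle$ to drop strictly infinitely often. On its own, your argument therefore only shows that for a countable complete $G$ the chain stabilizes, i.e.\ $\langle G^{N!}\rangle=\langle G^{n!}\rangle$ for $n\ge N$. The sentence ``Hence $G^{n!}$ is everything for each $n$'' does not follow.

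Your proposed reason for non-stabilization does not work as written. You reduce to $G/[G,G]$ and use the claim that a countable non-divisible abelian group has uncountable $n!$-adic completion. That claim is false once torsion is present: $\mathbb Q\oplus\mathbb Z_2$ is countable, non-divisible and complete, and its Hausdorff completion is just $\mathbb Z_2$. Abelianizations of torsion-free nilpotent groups can have torsion: the torsion-free group $\langle x,y,z\mid [x,z]=[y,z]=1,\ [x,y]=z^2\rangle$ has abelianization $\mathbb Z^2\oplus\mathbb Z_2$. So knowing only that $G/[G,G]$ is non-divisible, which is all your Mal'cev-type fact gives, is not enough. You would need the stronger statement that $G$ is divisible as soon as $G/[G,G]$ modulo its torsion subgroup is divisible, and you neither state nor prove it.

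The missing idea is the one the paper uses, and it closes your gap directly. Put $H=\bigcap_n\langle G^{n!}\rangle$.
\begin{itemize}
\item Using $\langle G^{m^s}\rangle\subseteq G^m$ ($s$ the class) and uniqueness of roots in torsion-free nilpotent groups, $H$ is divisible and $G/H$ is torsion-free.
\item The pro-Burnside topology of $G/H$ is Hausdorff, because the preimage of $\langle (G/H)^{n!}\rangle$ is $\langle G^{n!}\rangle\supseteq H$.
\item If $G$ is not divisible, then $G/H$ is nontrivial, countable and torsion-free. Hence no basic neighbourhood $\langle (G/H)^{n!}\rangle$ is trivial, so there are no isolated points.
\item Baire's theorem then shows $G/H$ is not complete. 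Theorem~\ref{TheoremTwo} gives a unimodular system over $G/H$ with no solution there, and it lifts to $G$.
\end{itemize}

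In your own setup the fix is two lines. If the chain stabilized at $K=\langle G^{N!}\rangle$, then $K\subseteq G^m$ for every $m$. So for any $g\in G$ and prime $p$ we get $g^{N!}=y^{N!p}$ for some $y$, whence $g=y^p$ by uniqueness of roots, i.e.\ $G$ is divisible. This is exactly the step your diagonal argument is missing.
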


In Section \ref{SectionDiv} we prove some auxiliary statements
on the connection between divisible subgroups of a nilpotent group $G$
and such subgroups of $G/Z(G)$.

In Section \ref{SectionPer} we prove Theorem \ref{TheoremOne}.

In Section \ref{SectionProBurn} we introduce the group topology
mentioned in Theorem \ref{TheoremTwo}
(which we later call the pro-Burnside topology)
and investigate some of its properties.

In Section \ref{SectionTorsionFree} we prove Theorem \ref{TheoremTwo}
and Corollary \ref{CountableTF}.

In Section \ref{SectionQuestions} we formulate some open questions
which have yet to be answered.

The author thanks the Theoretical Physics and Mathematics
Advancement Foundation ``BASIS''.
The author also thanks A.Yu. Olshanskii who 
suggested a more simple proof for Proposition \ref{NontrivDivis}.

\setcounter{theorem}{0}

\section{Divisible subgroups of nilpotent groups}\label{SectionDiv}

Recall that the upper central series of a group $G$
is a normal series
$$
Z_0(G) \triangleleft
Z_1(G) \triangleleft
Z_2(G) \triangleleft
Z_3(G) \triangleleft
\ldots
$$
where $Z_0(G) = \{ 1 \}$ and
$Z_i(G)$ is a set of elements $h \in G$
such that $\left[h, g \right] \in Z_{i-1}(G)$ for any $g \in G$.
In other words, $Z_i(G)\subset G$ is a preimage of the center of the quotient group
$G/Z_{i-1}(G)$ under taking the quotient $G \to G/Z_{i-1}(G)$.

\begin{prop}\label{NontrivDivis}
Suppose that $G$ is such nilpotent group
that $G/Z(G)$ contains a non-trivial divisible subgroup.
Then $G$ contains a non-trivial divisible subgroup as well.
\end{prop}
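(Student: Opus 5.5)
The plan is to push divisibility down the upper central series using commutator maps, which become homomorphisms once the relevant commutators are central. Write $\bar G = G/Z(G)$ and let $D \le \bar G$ be a nontrivial divisible subgroup.

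\emph{Step 1 (basic lemma).} If $H$ is a nilpotent group and $A \le Z(H/Z(H)) = Z_2(H)/Z(H)$ is a nontrivial divisible subgroup, then $Z(H)$ contains a nontrivial divisible subgroup. Let $\tilde A \le Z_2(H)$ be the full preimage of $A$. Pick $a \in \tilde A \setminus Z(H)$, which exists since $A \neq 1$, and pick $g \in H$ with $[a,g] \ne 1$. The map $\varphi(h) = [h,g]$ on $\tilde A$ takes values in $Z(H)$, because $\tilde A \subset Z_2(H)$. The identity $[h_1h_2,g] = [h_1,g]^{h_2}[h_2,g]$ then shows that $\varphi$ is a homomorphism. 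It kills $Z(H)$, so it factors through $A = \tilde A/Z(H)$. Its image $\varphi(\tilde A)$ is a homomorphic image of a divisible group, hence divisible. It is nontrivial because it contains $[a,g]$, and it lies in $Z(H)$.

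\emph{Step 2 (getting into a central factor).} $\bar G$ is nilpotent, so I will take the largest $j \ge 0$ with $D \not\subset Z_j(\bar G)$; then $D \subset Z_{j+1}(\bar G)$. If $j=0$, then $D \subset Z(\bar G)$ and Step 1 with $H=G$ finishes. Otherwise choose $g \in \bar G$ with $[D,g] \not\subset Z_{j-1}(\bar G)$. This is possible: otherwise $D \subset Z_j(\bar G)$, since $Z_j(\bar G)$ consists of the elements $d$ with $[d,g]\in Z_{j-1}(\bar G)$ for all $g$. Modulo $Z_{j-1}(\bar G)$, the map $d \mapsto [d,g]$ sends $D$ into the centre $Z_j(\bar G)/Z_{j-1}(\bar G)$ of $\bar G/Z_{j-1}(\bar G)$. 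By the same identity as in Step 1 it is a homomorphism, so its image $A_j$ is a nontrivial divisible subgroup of $Z(\bar G/Z_{j-1}(\bar G))$. Using $Z_i(\bar G)$ as the preimage-shifted $Z_{i+1}(G)/Z(G)$, this says that $G/Z_j(G)$ has a nontrivial divisible central subgroup.

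\emph{Step 3 (descending induction).} Write $G_i = G/Z_i(G)$. Then $G_i/Z(G_i) \cong G_{i+1}$ and $Z(G_i) = Z_{i+1}(G)/Z_i(G)$. Applying Step 1 to $H = G_{j-1}$ with the central divisible subgroup $A_j$ of $G_j$, I get a nontrivial divisible subgroup of $Z(G_{j-1})$. This subgroup is central in $G_{j-1} = G_{j-2}/Z(G_{j-2})$, so Step 1 applies again to $H = G_{j-2}$. Iterating down to $H = G_0 = G$ gives a nontrivial divisible subgroup of $Z(G)$, which proves the proposition.

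The main obstacle is that the commutator map is a homomorphism only on subgroups lying one step above the centre. A general divisible $D \le \bar G$ need not be central, and this is why Step 2 first replaces $D$ by a central divisible subgroup of a deeper quotient. The remaining verifications I expect to be routine bookkeeping: the identifications of the quotients $G_i$, and the fact that each image stays nontrivial.
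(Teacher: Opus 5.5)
Your proof is correct, but it takes a different route from the paper's.

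The paper works with the full preimage $H$ of the divisible subgroup. It takes the largest $k$ with $[H,Z_k(G)]=1$ and fixes $c\in Z_{k+1}(G)$ with $[H,c]\neq 1$. Every $[h_1,c]$ then lies in $Z_k(G)$ and so commutes with all of $H$. Hence $[h_1h_2,c]=[h_1,c][h_2,c]$ holds exactly in $G$, and $h\mapsto[h,c]$ is a homomorphism $H/Z(G)\to G$ with nontrivial divisible image. A single commutator map finishes the proof.

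Your commutator maps are homomorphisms only because their values are central (in $H$, or modulo $Z_{j-1}(\bar G)$). That requires the first argument to sit one step above a centre, which is what forces the preliminary reduction in Step 2 and the $j$-fold descent through the quotients $G/Z_i(G)$ in Step 3.

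In exchange, your argument proves more. It yields a nontrivial divisible subgroup of $Z(G)$ itself, whereas the paper's subgroup is only known to lie in $Z_k(G)$ and centralize $H$. The paper uses the proposition only to show that $G/Z(G)$ is reduced whenever $G$ is, and the weaker conclusion suffices there. For that purpose its choice of which commutator argument to fix, tailored to $H$ through the maximality of $k$, is the more economical trick.
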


\begin{proof} 
Suppose that $G/Z(G)$ contains a non-trivial divisible subgroup $\bar H$.
Consider its preimage $H \subset G$
under taking the quotient $G \to G/Z(G)$.
The quotient group is
$H / Z(G)$ divisible and non-trivial.

Take the largest $k$ such that
$\left[  H, Z_k(G) \right] = \{1 \}$. Note that $k$
is less than the nilpotency class of $G$
(as else $H$ satisfies the equation $[H,G] = \{ 1 \}$, that is
$H$ is central in $G$).
Then $\left[  H, Z_{k+1}(G) \right] \neq \{1 \}$.
In particular, $[h,c] \neq 1$
for some $h \in H, c \in Z_{k+1}(G)$.
Then consider the map
$h \mapsto [h,c]$
from $H$ to $G$.
Note that it is a group homomorphism. Indeed,
$$
[h_1h_2,c] = [h_1,c] \left[ [h_1,c],h_2 \right] [h_2,c]
$$
by a commutator identity. On the other hand, as
$c \in Z_{k+1}(G)$,
$[h_1,c] \in Z_k(G)$, particularly, this element commutes with $h_2$,
therefore
$$
[h_1h_2,c] = [h_1,c] [h_2,c].
$$
Also note that
$[h_1z,c] = [h_1,c]$ for $z \in Z(G)$.
In other words, the map $h \mapsto [h,z]$ induces a group homomorphism
$$
\varphi \colon H/Z(G) \to [H,Z_{k+1}(G)];
\quad
\varphi
\left(
hZ(G)
\right)
=
[h,c].
$$
As there is such element $h \in H$ that $[h,c] \neq 1$,
the image of $\varphi$ is non-trivial.
Since $H/Z(G)$ is a divisible group, the image of $\varphi$ is divisible as well.

So the image of $\varphi$ is the needed non-trivial divisible subgroup of $G$.

\end{proof}

\begin{lemma}\label{reducedfactor}
Suppose that $G$ is a reduced nilpotent group.
Then $G/Z(G)$ is reduced as well.
\end{lemma}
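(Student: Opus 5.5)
This is the contrapositive of Proposition \ref{NontrivDivis}, and I would argue it that way. Recall that a group is \emph{reduced} if it has no non-trivial divisible subgroups. So the statement to prove is: if $G$ is nilpotent and $G/Z(G)$ is not reduced, then $G$ is not reduced.

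Suppose $G/Z(G)$ is not reduced. By definition it then contains a non-trivial divisible subgroup $\bar H$. The group $G$ is nilpotent by hypothesis, so Proposition \ref{NontrivDivis} applies verbatim. It yields a non-trivial divisible subgroup of $G$, namely the image $[H,c]$ of the homomorphism $\varphi$ constructed there. Hence $G$ is not reduced, contradicting the assumption, and so $G/Z(G)$ is reduced.

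There is no real obstacle beyond fixing the definition of ``reduced''. If one instead uses the definition that the maximal divisible subgroup is trivial, the argument is the same. In a nilpotent group the product of divisible subgroups need not obviously be divisible, but that does not matter here. Non-reducedness is witnessed by any single non-trivial divisible subgroup, and that is exactly what Proposition \ref{NontrivDivis} consumes and produces.
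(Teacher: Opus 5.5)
Your proposal is correct and is exactly the paper's argument: the lemma is the contrapositive of Proposition~\ref{NontrivDivis}, since a non-trivial divisible subgroup of $G/Z(G)$ produces one in $G$, which contradicts reducedness. One notational point: the witness is the image $\{[h,c] \mid h \in H\}$ of $\varphi$, which is what your $[H,c]$ should be read as.
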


\begin{proof}
If $G/Z(G)$ contains a nontrivial divisible subgroup,
then $G$ contains a nontrivial divisible subgroup as well,
which is forbidden by hypothesis.

\end{proof}

\section{Periodic nilpotent groups}\label{SectionPer}

In this section we consider nilpotent $p$-groups.
Recall the definition of the height of an element
in such group.

\begin{defin}
Let $G$ be a nilpotent $p$-group.
The {\it height} of an element $g \in G$ is
the greatest number $n \in \mathbb N$ such that
$g = h^{p^n}$ for some $h \in G$.
If there is no greatest number among these, it is said that
$g$ has {\it infinite height}.
\end{defin}

\begin{prop}
Suppose that $G$ is a nilpotent $p$-group.
Then the set $G_0$ of elements of infinite height is
a normal subgroup such that
the quotient group $G/G_0$ has no elements of infinite height.
\end{prop}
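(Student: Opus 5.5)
The plan is to show that, up to a bounded loss of exponent, the set of $p^n$-th powers in $G$ is closed under products. Both claims then follow easily. Let $c$ be the nilpotency class of $G$ and write $\Gamma_n=\langle G^{p^n}\rangle$ for the subgroup generated by all $p^n$-th powers; it is characteristic, hence normal.

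\textbf{Key lemma.} There is a number $f=f(c,p)$ such that $\Gamma_{n+f}\subseteq G^{p^n}$ for every $n$. Once this holds, $G_0=\bigcap_n G^{p^n}=\bigcap_n \Gamma_n$, since $G^{p^n}\subseteq\Gamma_n\subseteq G^{p^{n-f}}$. So $G_0$ is an intersection of normal subgroups and is therefore a normal subgroup. Note that it is immediate that $G^{p^n}$ is closed under inverses and conjugation; only closure under products needs the lemma.

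\textbf{Quotient.} Suppose $gG_0$ has infinite height in $G/G_0$. Then for each $n$ we can write $g=h_n^{p^n}z_n$ with $z_n\in G_0\subseteq G^{p^n}$. Hence $g\in\Gamma_n$ for all $n$. By the lemma, $g\in G^{p^{n-f}}$ for all $n$, so $g$ has infinite height in $G$, that is, $g\in G_0$. Thus $G/G_0$ has no nontrivial element of infinite height.

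\textbf{Proof of the lemma.} I expect this to be the main obstacle, and I would argue by induction on $c$. The case $c\le 1$ is trivial: in an abelian group a product of $p^n$-th powers is again a $p^n$-th power.

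For the inductive step, put $A=\gamma_c(G)$, which is central. Apply the induction hypothesis to $G/A$ (class $c-1$, constant $f'$). Then every $x\in\Gamma_{m}$ with $m=n+f'+k$ can be written as $x=h^{p^{n+k}}a$ with $a\in A$. The task is to control $a$. Here I would use the Hall--Petrescu formula
\[
x^{q}y^{q}=(xy)^{q}\,c_2^{\binom q2}\cdots c_c^{\binom qc},\qquad c_i\in\gamma_i\langle x,y\rangle .
\]
For $q=p^m$ and $i\le c$, the exponent $\binom{q}{i}$ is divisible by $p^{m-\lfloor\log_p c\rfloor}$. Since commutators of weight $c$ are multilinear and central, their exponents can be pulled inside a single entry. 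Iterating this over a word in $p^m$-th powers should show that the correction term in $A$ is a product of $p^{n}$-th powers of elements of the abelian group $A$, hence a single $p^n$-th power $z^{p^n}$ with $z$ central. Then
\[
x=h^{p^{n+k}}z^{p^n}=(h^{p^k}z)^{p^n},
\]
because $z$ is central. So $x\in G^{p^n}$, and choosing $k=\lfloor\log_p c\rfloor$ gives the lemma with $f=f'+k$.

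The delicate point is the bookkeeping showing that the central defect really lies in $A^{p^n}$ and not merely in $A\cap\Gamma_{n}$. If that gets messy, I would fall back on quoting Mal'cev's classical result: in a nilpotent group of class $c$, every product of $N^{c}$-th powers is an $N$-th power. Taking $N=p^n$ gives $f=n(c-1)$, which depends on $n$ and not only on $c$. That is still enough, because the two arguments above only need $\bigcap_n\Gamma_n=\bigcap_n G^{p^n}$, and this remains true as long as the exponent shift tends to infinity together with $n$.
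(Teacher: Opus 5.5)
Your argument is correct once you use the fallback. Mal'cev's inclusion $\langle G^{N^c}\rangle\subseteq G^N$ with $N=p^n$ is exactly the tool the paper uses, and your two steps are the paper's argument: obtaining $G_0=\bigcap_n\Gamma_n$, and writing $g=h_n^{p^n}z_n$ with $z_n\in G^{p^n}$ as a product of two $p^n$-th powers. Your remark that $G_0$ is an intersection of normal subgroups is a slightly tidier way to get closure under products and normality at once.

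The uniform-shift lemma is not needed, and your sketch of it does not work as written. The Hall--Petrescu correction terms $c_i^{\binom{q}{i}}$ with $i<c$ lie in $\gamma_i(G)$ and are not central, so they cannot be absorbed into $A=\gamma_c(G)$ in one step. Also, the element $h$ supplied by the induction on $G/A$ is not the one Hall--Petrescu controls. Simply drop that part.
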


\begin{proof}
The normality of the subset $G_0$ is obvious.
Now let $g,h \in G_0, k \in \mathbb N$.
Denote the nilpotency class of $G$ by $s$.
Then $g = r^{p^{k\cdot s}}, h = t^{p^{k\cdot s}}$. 
Therefore $gh = r^{p^{k\cdot s}}t^{p^{k\cdot s}}$.
Since for a nilpotent group $G$ of class $s$
it is true that
$\left\langle
G^{n^s}  
\right\rangle \subset G^n$,
we get that $gh \in G^{p^k}$ for any $k$.
So a product of two elements of $G_0$
has an infinite height as well, so
$G_0$ is a subgroup (as an inverse of an element
of infinite height has an infinite height as well).

Now suppose that $\bar a \in G/G_0$ is a nontrivial element of infinite height
in $G/G_0$.
Consider an element $a$ of the preimage of $\bar a$ under
taking the quotient $G \to G/G_0$.
For every $k$ there is an element $b\in G$
such that $b^{p^{k\cdot s}} = a c$, where $c \in G_0$.
For $c$ there is an element $d \in G$ such that
$d^{p^{k\cdot s}} = c^{-1}$.
Hence $a = b^{p^{k\cdot s}} d^{p^{k\cdot s}}$.
Again, we get that
$a \in \left\langle G^{p^{k\cdot s}} \right\rangle \subset G^{p^k}$
for any $k$, therefore $a \in G_0$.
We arrive at a contradiction with the non-triviality of $\bar a$.
That is why there are no nontrivial
elements of infinite height in $G/G_0$.

\end{proof}

\begin{defin}
The quotient group $G/G_0$
is called the {\it first Ulm factor}
of $G$.
\end{defin}

\begin{lemma} \label{fullhi}
Suppose that $G$ is a nilpotent $p$-group with
the first Ulm factor of bounded period.
Then $G_0$ is divisible.
\end{lemma}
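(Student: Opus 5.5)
The plan is to show directly that every element of $G_0$ has a $q$-th root inside $G_0$ for every prime $q$. The key observation is that bounded period of the first Ulm factor pushes suitable powers of roots back into $G_0$. Throughout, I use the proposition above: $G_0$ is a (normal) subgroup of $G$.

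First I dispose of primes $q \neq p$. Let $g \in G_0$. Since $G$ is a $p$-group, $g$ has order $p^k$ for some $k$. As $q$ is invertible modulo $p^k$, choose $a$ with $aq \equiv 1 \pmod{p^k}$. Then $(g^a)^q = g$, and $g^a \in G_0$ because $G_0$ is a subgroup. So $G_0$ is $q$-divisible for every $q \neq p$.

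The main step is the prime $p$. Let $p^m$ be a bound for the period of $G/G_0$, so $x^{p^m} \in G_0$ for every $x \in G$. Let $g \in G_0$. Since $g$ has infinite height, there are $n \geq m+1$ and $h \in G$ with $h^{p^n} = g$; indeed, by definition the set of exponents $n$ with $g \in G^{p^n}$ has no maximum, so it contains arbitrarily large $n$. Put
$$
r = h^{p^{n-1}} = \left(h^{p^m}\right)^{p^{n-1-m}}.
$$
Here $h^{p^m} \in G_0$ and $n-1-m \geq 0$, so $r \in G_0$ because $G_0$ is a subgroup. Moreover $r^p = h^{p^n} = g$, so $g$ has a $p$-th root in $G_0$.

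Combining the two cases, every element of $G_0$ has a $q$-th root in $G_0$ for every prime $q$, so $G_0$ is divisible. The only point needing care is that the root must lie in $G_0$ itself and not merely in $G$. The exponent bound on $G/G_0$ handles exactly this, by letting us take the root of $g$ to be a $p^m$-th power of an element of $G$.
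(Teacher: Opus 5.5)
Your proof is correct and is essentially the paper's argument. Both take a root $h$ of $g$ of $p$-power order at least $p^{m+1}$, and use that $h^{p^m}\in G_0$ to land a $p$-th root of $g$ inside $G_0$. Your explicit handling of primes $q \neq p$, by inverting $q$ modulo the order of $g$, spells out what the paper dismisses by noting that elements of $G_0$ have $p$-power order.
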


\begin{proof}
Since the order of any element of $G_0$ is a power of
the prime number $p$,
it suffices to prove that for each $g \in G_0$
there is $t \in G_0$ such that $t^p = g$.

Let $g \in G_0$.
Suppose that the period of $G/G_0$ is $p^l$.
Then take an element $r \in G$,
such that $r^{p^{l+1}}=g$. As $r^{p^l} \in G_0$,
we conclude that for any element $g \in G_0$
there is an element $t \in G_0$ such that
$t^p = g$, as needed.

\end{proof}

\begin{lemma}\label{ulmbad}
Suppose that $G$ is a reduced nilpotent
$p$-group of unbounded period.
Then there is a unimodular system of equations over $G$
which has no solutions in $G$.
\end{lemma}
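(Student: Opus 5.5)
We would build a ``telescoping'' unimodular system in variables $x_1,x_2,\ldots$,
$$
x_n = g_n\, x_{n+1}^{p}, \qquad n=1,2,\ldots,
$$
with coefficients $g_n\in G$ still to be chosen. Its matrix of exponent sums is upper bidiagonal, with $1$ on the diagonal and $-p$ just above it. Any finite set of rows has a lowest-indexed leading variable, and this gives linear independence over $\mathbb Z_p$ and over every $\mathbb Z_q$, so the system is unimodular. If $(x_n)$ is a solution, substituting repeatedly gives, for every $n$,
$$
x_1 \in g_1 g_2^{p}\cdots g_n^{p^{n-1}}\cdot\bigl\langle G^{p^{n}}\bigr\rangle
$$
up to commutator corrections. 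In a nilpotent group of class $s$ these corrections are controlled by the inclusion $\langle G^{m^s}\rangle\subset G^m$ already used for $G_0$. So the key consequence is that $x_1$ is congruent to the partial products $s_n$ modulo $G^{p^{\lfloor n/s\rfloor}}$ (after fixing a suitable bracketing). We then choose the $g_n$ so that no single element of $G$ can satisfy all these congruences.

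The contradiction will come from the orders. Since $G$ is periodic, $x_1$ has some order $p^N$. The plan is to pick elements $g_n$ of rapidly increasing order and of small height, so that for every $N$ there is an $n$ with $s_n^{p^N}$ not lying in $G^{p^{m}}$ for large $m$. On the other hand, raising $x_1 \equiv s_n$ to the power $p^N$ forces $s_n^{p^N}$ into $G^{p^{\lfloor n/s\rfloor}}$ modulo correction terms. Unbounded period supplies elements $a_k$ of order at least $p^k$, so it remains to choose them with controlled heights. Here reducedness is essential: in a divisible group every element has infinite height and the argument must fail.

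I would first carry this out for an abelian reduced $p$-group $A$ of unbounded period. There $G_0$ need not be trivial, but we may pass to the first Ulm factor $A/A_0$. If $A/A_0$ had bounded period, Lemma~\ref{fullhi} would make $A_0$ divisible, hence trivial by reducedness, and then $A$ would be bounded, a contradiction. So $A/A_0$ is unbounded and has no elements of infinite height. In it we can choose $\bar g_n$ of height $0$ with orders $p^{k_n}$, $k_n$ strictly increasing, and verify directly that $\sum p^{i-1}\bar g_i$ cannot converge to a torsion element. Any solution in $A$ maps to a solution in $A/A_0$, so no solution exists.

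For a general nilpotent $G$, I would reduce to a suitable abelian section using Lemma~\ref{reducedfactor}, which says that $G/Z(G)$ is again reduced. If $G/Z(G)$ has unbounded period, a solution in $G$ projects to a solution in $G/Z(G)$, so we may induct on the nilpotency class. Otherwise $G/Z(G)$ has some bounded period $e$, and $Z(G)$ must be unbounded and reduced. In that case I would take coefficients $g_n\in Z(G)$ and use the fact that $x^e\in Z(G)$ for every $x\in G$, so that all $p^{\,\cdot}$-powers of solution components far enough down the chain are central and the abelian argument applies inside $Z(G)$. The hard step is comparing heights measured in $G$ with heights measured in $Z(G)$: an element of $Z(G)$ might be a $p^m$-th power of a non-central element. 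I would try to handle this by replacing the exponent $p$ in the system with $p^{r}$, where $p^r\ge e$, which absorbs the bounded discrepancy; the system stays unimodular because the diagonal entries remain $1$.
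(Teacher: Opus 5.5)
\textbf{Gap in the abelian step.} The nilpotent reductions are sound, but the abelian step, which carries all the weight, does not work as stated. Choosing $\bar g_n$ of height $0$ with strictly (even rapidly) increasing orders does not make $x_n - p x_{n+1} = \bar g_n$ unsolvable. Here is a counterexample. In any unbounded abelian $p$-group $\bar A$ without elements of infinite height, pick $w_0, w_1, \ldots$ of height $0$ with orders $p^{K_n}$, where $K_n \ge K_{n-1}+2$. Such elements exist: if $h$ is the height of $\bar a$ and $\bar a = p^h \bar b$, then $\bar b$ has height $0$. Put $\bar g_n = w_{n-1} - p w_n$. Then:
\begin{itemize}
\item $\bar g_n$ has height $0$, since otherwise $w_{n-1} \in p\bar A$;
\item $\bar g_n$ has order $p^{K_n-1}$;
\item yet $x_n = w_{n-1}$ is a solution.
\end{itemize}
Equivalently, $\sum_{i\le n} p^{i-1}\bar g_i = w_0 - p^n w_n$ converges to the torsion element $w_0$. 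The height of $\bar g_n$ says nothing about the heights of its multiples $p^j\bar g_n$, nor about cancellation between different coefficients. So the claim you intended to ``verify directly'' is false for this class of coefficients.

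\textbf{The missing ingredient.} You need the structural fact the paper uses: an unbounded abelian $p$-group with no elements of infinite height is separable. Hence it has cyclic direct summands $\langle a_n\rangle$ of arbitrarily large order $p^{k_n}$. Take $\bar g_n = a_n$ with $k_{n+1}-k_n \ge r+1$ in the system $x_n = a_n + p^r x_{n+1}$. Then
$x_1 = a_1 + p^r a_2 + \cdots + p^{r(n-1)}a_n + p^{rn}x_{n+1}$.
The projection $\pi_n$ onto $\langle a_n\rangle$ sends $a_i$ ($i<n$) into $p^{k_n-k_i}\langle a_n\rangle$. Hence $\pi_n(x_1) \equiv p^{r(n-1)}a_n$ modulo $p^{r(n-1)+1}\langle a_n\rangle$. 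So $x_1$ has order at least $p^{k_n - r(n-1)} \ge p^{k_1+n-1}$ for every $n$, contradicting periodicity.

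\textbf{The nilpotent case.} With this in place, your reduction works and is simpler than you feared. Suppose $p^r$ is divisible by the period of $G/Z(G)$ and the $g_n$ are central. Then every $x_{n+1}^{p^r}$ is central, so every component $x_n = g_n x_{n+1}^{p^r}$ of a solution already lies in $Z(G)$. No comparison of heights in $G$ and $Z(G)$ is needed: you just apply the abelian case to the reduced unbounded group $Z(G)$. The paper instead first passes to $G/G_0$ and uses varying exponents $p^{k_i-k_{i-1}}$, but its essential ingredient is the same: cyclic summands of the center plus a projection argument.
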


\begin{proof}
We argue by induction on the nilpotency class of $G$.

The base of induction (i.e. the case when $G$ is abelian) is proved in \cite{M25}.

Suppose that the statement of the lemma is proved
for nilpotent groups of class $s-1$.
Now we prove the statement for nilpotent groups
of class $s$.
Note that the quotient group $G/G_0$
is a nilpotent $p$-group with no elements
of infinite height which has unbounded period by Lemma
\ref{fullhi}, since $G_0$ is not divisible unless $G_0$
is trivial.
Hence $G$ can be assumed to have no elements of infinite height.

Now consider $G/Z(G)$. By Lemma
\ref{reducedfactor}, it is a reduced group.
If it has unbounded period, then by induction
there is a unimodular system of equations over $G/Z(G)$
which has no solutions in $G/Z(G)$.
Therefore, there is an analogous system of equations over $G$ as well.

Suppose, on the contrary, that $G/Z(G)$ has a bounded period $p^m$.
Then $Z(G)$ has unbounded period.
So $Z(G)$ is an abelian $p$-group of unbounded period with no
elements of infinite height.
Thus, it is separable, i.e.
for any $r \in \mathbb N$
there is a cyclic direct multiplier of $Z(G)$:
$$
Z(G) = \langle a \rangle_{\tilde r} \times B,
$$
such that $\tilde r > r$.
Hence there are following decompositions:
$$
Z(G) = \langle a_i \rangle_{k_i} \times B_i,
$$
such that $k_{i+1} - k_i > k_i$, i.e. $k_{i+1} > 2 k_i$,
and $k_1 > m$.

Consider then the following system of equations:
$$
\begin{cases}
x_1x_2^{-p^{k_1}}& = a_1, \\
x_2x_3^{-p^{k_2-k_1}} & = a_2, \\
x_3x_4^{-p^{k_3-k_2}}& = a_3,\\
& \vdots 
\end{cases}.
$$
Note that this system is unimodular, since the matrix of exponent sums of variables
in equations is as follows.
$$
\begin{pmatrix}
1 & -p^{k_1} & 0 & 0 & \ldots \\
0 & 1 & -p^{k_2-k_1} & 0 & \ldots \\
0 & 0 & 1 & -p^{k_3-k_2} & \ldots \\
0 & 0 & 0 & 1 & \ldots \\
\vdots & \vdots & \vdots& \vdots& \ddots
\end{pmatrix}
$$

Suppose that the group $G$ contains a solution $\{y_i\}_{i \in \mathbb N}$ of the system above.
In that case the elements $y_2^{-p^{k_1}}, y_3^{p^{k_2-k_1}}, y_4^{-p^{k_3-k_2}}$, and so on,
lie in the center of $G$, therefore, in particular, we get that
$$
a_2^{p^{k_1}} = \left( y_2y_3^{-p^{k_2-k_1}} \right)^{p^{k_1}} = y_2^{p^{k_1}}y_3^{-p^{k_2}},
$$
$$
a_3^{p^{k_2}} =\left( y_3y_4^{-p^{k_3-k_2}} \right)^{p^{k_2}} = y_3^{p^{k_2}} y_4^{-p^{k_3}},
$$
and so on. By multiplying these expressions we get
$$
y_1 \cdot y_{n+1}^{-p^{k_n}} = a_1 \cdot a_2^{p^{k_1}} \cdot \ldots \cdot a_n^{p^{k_{n-1}}}.
$$
Since the order of $a_i^{p_{i-1}}$ equals $p^{k_i - k_{i-1}}$,
the order of the right-hand side of this equation equals the order of
$a_n^{p^{k_{n-1}}}$ (as the other elements have smaller orders),
that is $p^{k_n - k_{n-1}}$.

Also note that $y_{n+1}^{p^{k_n}} = z^{p^{k_n - m}}$
for some $z\in Z(G)$.
In the decomposition
$$
Z(G) = \langle a_n \rangle_{p^{k_n}} \times B_n
$$
the first component of $z^{p_{k_n} - m}$
has order at most $p^m$.
Therefore, if $k_n - k_{n-1}>m$, then
the order of $y_1$ is at least $p^{k_n-k_{n-1}}$,
as the order of $y_{n+1}^{p^{k_n}} = z^{p^{k_n - m}}$
in that case is less than $p^{k_n - k_{n-1}}$.

Since
$$
k_i - k_{i-1} > k_{i-1} - k_{i-2} > \ldots > k_2 - k_1 > k_1 > m,
$$
the condition
$$
k_i - k_{i-1} > m
$$
is indeed satisfied for all $i \in \mathbb N$.
On the other hand,
$$
k_i - k_{i-1} > k_{i-1} > k_{i-2} \cdot 2> \ldots > k_1 \cdot 2^{i-2} > m \cdot 2^{i-2}.
$$
That is why the order of $y_1$ is at least $p^{m \cdot 2^{n-2}}$
for any sufficiently big number $n$.
In other words, $y_1$ has infinite order, which is impossible
in the periodic group $G$. Thus, the system above has no solutions in $G$.

As a conclusion, a unimodular system of equations over $G$,
which has no solutions in $G$ itself,
is found both in case when $G/Z(G)$ has bounded period
and in case when the period of $G/Z(G)$ is unbounded,
hence the proof of the lemma is complete.

\end{proof}

\begin{lemma} \label{ulmgood}
Suppose that $G$ is a nilpotent
$p$-group with the first Ulm
factor of bounded period.
Then every $p$-nonsingular
system of equations over $G$
has a solution in $G$.
\end{lemma}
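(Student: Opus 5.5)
We reduce to the two results from \cite{M25} quoted in the introduction, one for each layer of the extension
$$
1 \to G_0 \to G \to G/G_0 \to 1.
$$
By Lemma \ref{fullhi}, $G_0$ is a divisible nilpotent $p$-group. The quotient $\bar G = G/G_0$ is a nilpotent $p$-group of bounded period $p^l$.

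The plan has two stages: first solve the system modulo $G_0$, then correct the solution inside $G_0$.

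\textbf{Step 1: solve modulo $G_0$.} Let $\{w_i=1\}_{i\in I}$ be a $p$-nonsingular system over $G$, and consider its image over $\bar G$. Since $\bar G$ has exponent a power of $p$, only $p$-nonsingularity matters here. The exponent-sum rows stay linearly independent over $\mathbb Z_p$, which is exactly what the bounded-period proof of \cite{M25} uses.

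That proof is stated for unimodular systems, but it really only uses nonsingularity at the primes dividing the period. Concretely, it passes to the abelian layers of the upper central series, each of exponent dividing $p^l$. There one solves linear systems over $\mathbb Z/p^l$, and the rows are independent mod $p$. I will re-run that argument, or cite it in this slightly sharpened form, to get elements $\bar g_j\in \bar G$ solving the system in $\bar G$.

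\textbf{Step 2: correct inside $G_0$.} Pick lifts $g_j\in G$ of the $\bar g_j$. Substitute $x_j = g_j y_j$ with new unknowns $y_j$ to be found in $G_0$. Each equation becomes $w_i(g_jy_j)=1$. Since $w_i(g_j)\in G_0$ and $G_0$ is normal, this is an equation whose coefficients can be collected into elements of $G_0$, together with conjugates of the $y_j$ by elements of $G$.

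It is not literally an equation over $G_0$, because conjugation by $g$ acts on $G_0$. So I plan to work layer by layer along a central series of $G_0$ that is normal in $G$. The terms $Z_k(G)\cap G_0$ will do; the factors are abelian and divisible, divisibility being inherited via Proposition \ref{NontrivDivis}-type arguments, or by taking divisible parts.

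On each abelian factor $A$, the equations become linear equations in $A$ over the group ring $\mathbb Z[\bar G]$, since $G$ acts through a finite-exponent quotient. I will show the action of $G$ on each factor of a suitable central series of $G_0$ is trivial. For the series $[G_0,G,\dots,G]$ refined appropriately, $G$ acts trivially on the factors, because $G$ is nilpotent.

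The linear system then has the original exponent-sum matrix. That matrix is nonsingular over $\mathbb Q$, since $p$-independence implies $\mathbb Z$-independence, and $p$-nonsingular. Over a divisible abelian $p$-group $A$, which is a $\mathbb Z_{(p)}$-module, invertibility at $p$ suffices. Infinite systems over a divisible abelian $p$-group, a direct sum of copies of $\mathbb Z_{p^\infty}$ and hence injective, are solvable for $p$-independent rows by the divisible-group theorem of \cite{M25}. Solving layer by layer, and absorbing higher-order terms into the next layer, gives the correction.

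\textbf{Main obstacle.} I expect the hardest step to be Step 2. One must make sure the factors of the chosen $G$-central series of $G_0$ are divisible; quotients of divisible groups are divisible, so the series $G_0 \supset [G_0,G] \supset \dots$ works if those terms are divisible. One must also make sure the induced linear system on each layer is exactly given by the exponent-sum matrix.

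The fallback is to apply the divisible nilpotent theorem of \cite{M25} directly to $G_0$. To do so, I would treat the conjugates $y_j^{g}$ as new variables linked by relations $y_j^{g}=g^{-1}y_jg$, and check that nonsingularity survives.
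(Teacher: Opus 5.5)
Your architecture differs from the paper's and can be completed: solve in the bounded quotient $G/G_0$, then correct inside the divisible subgroup $G_0$. But Step 2 as written has a genuine hole exactly where you flag it. Nothing you say shows that the layers in which you make the corrections are divisible. Without that, the layer-by-layer solving is unjustified, because $p$-nonsingular linear systems can fail in abelian $p$-groups with unbounded reduced part (the abelian case behind Lemma \ref{ulmbad}). Your three suggested justifications do not supply it:
\begin{itemize}
\item Proposition \ref{NontrivDivis} only produces \emph{some} nontrivial divisible subgroup. It says nothing about divisibility of $(Z_{k+1}(G)\cap G_0)/(Z_k(G)\cap G_0)$.
\item Passing to divisible parts does not help, since the error terms $w_i(g_j)$ lie in the whole layer, not in its divisible part.
\item The fallback with the conjugates $y_j^g$ as new unknowns does not give a system over $G_0$ at all, because the linking relations still carry the coefficients $g\notin G_0$.
\end{itemize}
Note also that what you need is divisibility of the factors of $G_0\supseteq[G_0,G]\supseteq\dots$, not of its terms.

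The missing argument is short. Put $N_0=G_0$ and $N_{k+1}=[N_k,G]$. Then $N_k/N_{k+1}$ is abelian and central in $G/N_{k+1}$. For each $g\in G$, the identity $[n_1n_2,g]=[n_1,g]\,[[n_1,g],n_2]\,[n_2,g]$, whose middle factor lies in $N_{k+1}$, shows that $nN_k\mapsto[n,g]N_{k+1}$ is a well-defined homomorphism $N_{k-1}/N_k\to N_k/N_{k+1}$. These images generate $N_k/N_{k+1}$, so divisibility passes down inductively from $G_0/N_1$.

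In fact more is true: $G_0\le Z(G)$.
\begin{itemize}
\item First, $G_0$ is abelian. If its class were $c\ge 2$, take $x\in\gamma_{c-1}(G_0)$ of order $p^a$ and write $y=z^{p^a}$ with $z\in G_0$. Bilinearity gives $[x,y]=[x,z]^{p^a}=[x^{p^a},z]=1$, so $\gamma_c(G_0)=1$.
\item Next, set $D_0=G_0$ and $D_{k+1}=[D_k,G]$. These are divisible, being generated by images of the endomorphisms $d\mapsto[d,g]$ of the abelian normal subgroups $D_k$.
\item Suppose $t\ge 1$ is largest with $D_t\ne 1$, so $D_t\le Z(G)$. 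For $g$ of order $p^b$ and $e=f^{p^b}\in D_{t-1}$, we get $[e,g]=[f,g]^{p^b}=[f,g^{p^b}]=1$. Hence $D_t=1$, a contradiction.
\end{itemize}

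So Step 2 is a single linear correction in the central divisible group $G_0$, with the exponent-sum matrix. It is solvable because $p$-independent rows are $\mathbb Z$-independent and $G_0$ is injective.

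Completed this way, your proof is a genuine alternative to the paper's. The paper inducts on the class along $\gamma_s(G)$ and must verify that $(\gamma_s(G))^m$ is divisible, so that the abelian case (divisible $\oplus$ bounded) applies to the central layer. Your version cleanly separates the bounded-period linear algebra from the divisible part. The price is that Step 1 is itself the case $G_0=1$ of the lemma, so it needs the routine central-series induction you describe rather than a bare citation of the unimodular theorem.
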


\begin{proof}
Note that the subgroup $G_0\subset G$ is divisible by Lemma \ref{fullhi}.
We argue by induction on the nilpotency class of $G$.
For abelian groups we have a decomposition of $G$
into a direct product of $G_0$
and the first Ulm factor of $G$, which has bounded period.
Such groups contain solutions of all $p$-nonsingular systems of equations over themselves
\cite{M25}.

Suppose that the statement of the lemma is proved for
nilpotent groups of nilpotency class $s-1$.
Now we prove the statement for nilpotent groups of class $s$.
Consider $G/\gamma_{s}(G)$. As $(G/\gamma_{s}(G))_0$
contains the image of $G_0$ under taking the quotient
$G \to G/\gamma_{s}(G)$,
the first Ulm factor of
$G/\gamma_{s}(G)$ has bounded period as well.
Hence, by induction, $G/\gamma_{s}(G)$ contains a solution of any
$p$-nonsingular system of equations over itself.
This means that any $p$-nonsingular system of equations over $G$,
after a fitting substitution of variables,
can be made such that
the product of coefficients of any equation lies in $\gamma_{s}(G)$.
So from now we consider only systems with this property.

Assume that the variables in the system commute with each other as well as with the coefficients of $W$.
In that case the system $W$ is equivalent to a system $W'$, in which every equation
has no coefficients apart from one coefficient lying in $\gamma_{s}(G)$.
Note that a set of elements from $\gamma_{l+1}(G)$ is a solution of $W$
if and only if this set is a solution of $W'$.
Indeed, the subgroup $\gamma_{l+1}(G)$
is central in $G$, therefore the elements of such solution
actually commute with each other and with coefficients of $W$,
so in this case the systems $W$ and $W'$ are indeed equivalent.
As the system $W$ is $p$-nonsingular,
such is the system $W'$ as well. Thus,
to prove the lemma, it suffices to prove that $\gamma_{l+1}(G)$
satisfies the conditions of the lemma.

We prove that there is a number $t$
such that $\left( \gamma_{s}(G) \right)^t$ is divisible
(from this fact follows that the reduced part of
$\gamma_{s}(G)$ has bounded period, as needed).
Take $[g,h]$, where $g \in G, h \in \gamma_{s-1}(G)$.
Suppose that the period of the first Ulm factor of $G$
equals $m$. Then $g^m \in G_0$. So for any natural number $k$
there is an element $r \in G$ such that $g = r^k$.
It follows that
$$
[g^m, h] = [r^{km},h] = [r,h]^{km}
\mod{\gamma_{s+1} = \{1\}}.
$$
Also note that $[g^m,h] = [g,h]^m \mod{\gamma_{s+1}(G) = \{1\}}$.
As a conclusion,
$$
[g,h]^m
=
[r,h]^{km}.
$$
In other words, for any element $c$ of the form $[g,h]^m \left( \gamma_{s}(G) \right)^m$,
where $g \in G, h \in \gamma_{s-1}(G)$,
and any integer $k$ there is an element $b:=[r,h]^m \in \left( \gamma_{s}(G) \right)^m$ 
such that $c = b^k$.
As these elements generate the abelian subgroup
$\left( \gamma_{s}(G) \right)^m$, we conclude that this subgroup is divisible.

\end{proof}

The proved lemmas allow us to prove Theorem
\ref{TheoremOne}, which was formulated in the introduction.
Recall its statement.

\begin{theorem}
Suppose that $G$ is a periodic nilpotent group.

Then $G$ contains a solution of any unimodular system of
equations over itself
if and only if
only a finite number of $p$-components of
$G$ has non-trivial first Ulm factor and
all these factors have bounded period.
\end{theorem}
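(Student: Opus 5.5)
Since $G$ is periodic nilpotent, it decomposes as $G=\prod_p G_p$, a restricted direct product of its $p$-components, and any system of equations over $G$ splits componentwise. The plan is to reduce both directions of Theorem \ref{TheoremOne} to Lemmas \ref{ulmbad} and \ref{ulmgood}, plus a gluing argument across primes.

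\emph{Sufficiency.} Let $p_1,\dots,p_r$ be the primes whose components have nontrivial first Ulm factor, and let $m$ bound the periods of these factors. For every other prime $p$, the component $G_p$ coincides with its subgroup $(G_p)_0$ of elements of infinite height. By Lemma \ref{fullhi}, $(G_p)_0$ is divisible, so $G_p$ is a divisible nilpotent group. Split $G=D\times F$, where $F=G_{p_1}\times\dots\times G_{p_r}$ and $D$ is the product of the remaining components. Then $D$ is divisible nilpotent. Given a unimodular system $W$ over $G$, project its coefficients to $D$ and to each $G_{p_i}$, which gives systems with the same exponent matrix. Over $D$ the system is nonsingular, so it is solvable by the cited theorem from \cite{M25}. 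Over each $G_{p_i}$ it is $p_i$-nonsingular, so it is solvable by Lemma \ref{ulmgood}. The variable sets may be infinite, but each $y_j$ is assembled from finitely many components, so it lies in the restricted product. The componentwise tuple therefore solves $W$ in $G$.

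\emph{Necessity.} There are two failure modes.
\begin{enumerate}
\item Suppose some $G_p$ has first Ulm factor of unbounded period. The quotient $G_p/(G_p)_0$ is reduced, since it has no elements of infinite height. Lemma \ref{ulmbad} then gives a unimodular system over that quotient with no solution. Lift its coefficients to $G_p\subset G$. A solution in $G$, projected to $G_p$ and then to $G_p/(G_p)_0$, would solve the original system, which is impossible.
\item Suppose infinitely many $G_{p_i}$ have nontrivial first Ulm factor. Pass to the quotients $\bar G_i=G_{p_i}/(G_{p_i})_0$, which are nontrivial and have no elements of infinite height. In each $\bar G_i$, choose an element $\bar a_i$ not divisible by $p_i$; this exists because $\bar G_i$ is reduced and, being a nontrivial nilpotent $p_i$-group, is not $p_i$-divisible. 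Mimic the abelian counterexample over $\mathbb Z_2\oplus\mathbb Z_3\oplus\dots$ with the system $x_i x_{i+1}^{-p_i}=a_i$. Its matrix has $1$ on the diagonal and is upper triangular, so it is unimodular. Project a putative solution to the $p_i$-component modulo $(G_{p_i})_0$. Only finitely many components of $y_i$ are nontrivial, so the component of $y_i$ at $p_i$ is trivial for large $i$. For such $i$ the equation gives $\bar a_i=\bar y_{i+1}^{\,p_i}$ in the $p_i$-component, contradicting the choice of $\bar a_i$.
\end{enumerate}
Pinning down the exact form of the system in case (2) needs care, because the equations involve coefficients from different components. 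Making each equation's contradiction local to a single prime, while keeping unimodularity, is the main obstacle I expect. The fallback is to take instead $x_i^{p_i}\cdots$ arranged so that the exponent sum of each row stays coprime to every prime.
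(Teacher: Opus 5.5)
Your sufficiency direction and your case (1) are fine and essentially coincide with the paper. You split off the divisible part and apply Lemma~\ref{ulmgood} to the finitely many remaining components. For an unbounded first Ulm factor, you apply Lemma~\ref{ulmbad} to the reduced group $G_p/(G_p)_0$ and lift the coefficients.

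The gap is in case (2). The step ``only finitely many components of $y_i$ are nontrivial, so the component of $y_i$ at $p_i$ is trivial for large $i$'' does not follow. Finiteness of support is a property of each single element, and here the element $y_i$ changes together with the prime $p_i$. Worse, your own equations force the opposite. Project $y_i=a_iy_{i+1}^{p_i}$ to $G_{p_i}/(G_{p_i})_0$: if the image of $(y_i)_{p_i}$ were trivial, then $\bar a_i$ would be a $p_i$-th power. So the argument as written derives its contradiction from a false premise. What the equations actually give you is that $(y_i)_{p_i}\neq 1$ for every $i$, and this is not contradictory by itself.

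The missing idea is to transport this information to one fixed variable. For $j<i$ the coefficient $a_j\in G_{p_j}$ has trivial $p_i$-component, so the $p_i$-projection of the $j$th equation is $(y_j)_{p_i}=(y_{j+1})_{p_i}^{p_j}$. Hence $(y_1)_{p_i}=(y_i)_{p_i}^{p_1\cdots p_{i-1}}$. This is nontrivial, because a nontrivial element of a $p_i$-group stays nontrivial after raising to a power prime to $p_i$. So $y_1$ has a nontrivial $p_i$-component for every $i$, i.e.\ infinite order, which is the required contradiction.

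The paper avoids this chase with a star-shaped system $xy_i^{-p_i^{m_i}}=a_i$. It has one shared variable $x$, takes $a_i\notin(G_{p_i})_0$, and lets $p_i^{m_i}$ be the period of the $i$th first Ulm factor (bounded, since case (1) is already excluded). Every equation then directly forces a nontrivial $p_i$-component of the same element $x$.

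Your system was already unimodular, so the fallback you sketch is unnecessary; only the unsolvability argument needed repair. Once repaired, your choice of $\bar a_i$ as a non-$p_i$-th power has a small advantage over the paper's: case (2) then does not need the bounded-period hypothesis.
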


\begin{proof}
Denote the $p$-component of $G$ by $G_p$.

If there is component $G_p$
of $G$ with unbounded period of its first Ulm factor,
then, by Lemma \ref{ulmbad}, there is a unimodular
system over $G_p$ with no solutions in $G_p$
(and, consequently, there is an analogous system over $G$).

If, nonetheless, all of the first Ulm factors of $p$-components have bounded periods,
but there is an infinite number of nontrivial ones among them,
then $G$ contains no solutions of the system
$$
\begin{cases}
x y_1^{-p_1^{m_1}}& = a_1,\\
x y_2^{-p_2^{m_2}}& = a_2 , \\
& \vdots 
\end{cases}
$$
where $p_i^{m_i}$ is a period of $G_{p_i}/\left( G_{p_i} \right)_0$,
while $a_i \in G_{p_i} \setminus (G_{p_i})_0 $.
Indeed, having a solution of this system assumes that the element $x$
of this solution has a nontrivial $p_i$-component for every $i$
(since the corresponding component of  $y_i^{p_i^{m_i} }$
lies in $(G_{p_i})_0 \not\ni a_i$),
thus $x$ has infinite order, which is impossible in a periodic group.

Now note that the matrix of exponent sums of variables in the equations of the system is
$$
\begin{pmatrix}
1 & -p_1^{m_1} & 0 & 0 & 0 & \ldots \\
1 & 0 & -p_2^{m_2} & 0 & 0 & \ldots \\
1 & 0 & 0 & -p_3^{m_3} & 0& \ldots \\
1 & 0 & 0 & 0  & -p_4^{m_4}& \ldots \\
\vdots & \vdots & \vdots& \vdots & \vdots & \ddots
\end{pmatrix}.
$$
It follows that the system above is $p$-nonsingular
both for prime numbers $p_i$, which are included in this matrix,
(since a power of each of such prime numbers is included in the matrix only once)
and for the remaining prime numbers $p$.
In other words, this system, which has no solutions in $G$, is unimodular.

If, on the contrary, there is only a finite number of $p$-components $G_p$
of $G$ with nontrivial first Ulm factors, and all these factors have bounded period,
then $G$ is decomposed into a direct product of
a divisible nilpotent group
and a finite number of
groups, which satisfy Lemma \ref{ulmgood}.
Such product contains a solution to every
unimodular system of equations over itself.

\end{proof}

\section{Pro-Burnside topology}\label{SectionProBurn}

In this and next sections we use additive notation for abelian groups.
For an abelian group $A$ consider a group topology with basic neighbourhoods of $0$ being $nA$.
Note that quotient groups by such neighbourhoods have bounded periods,
that is why we call this topology {\it pro-Burnside},
analogously to the profinite topology,
or {\it factor-adic} (analogously to the $p$-adic topology).

Neighbourhoods of the profinite topology are open in the pro-Burnside topology as well,
since finite groups have bounded period.
The converse, however, is not always true, as
the neihgbourhood $2A$ of the infinite direct sum
$$
A = \mathbb Z \oplus \mathbb Z \oplus \ldots
$$
is not open in the profinite topology (as the respective quotient group is not finite).
So the pro-Burnside topology is generally
finer than the profinite topology.

\begin{prop}
Both families $\{nA \mid a \in A, n \in \mathbb N\}$ and
$\{n! A \mid a \in A, n \in \mathbb N\}$
can be taken to be a neighbourhood
base of $0$ in the pro-Burnside topology on the abelian group $A$.
\end{prop}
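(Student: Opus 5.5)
The statement only asks us to check that two families of subgroups generate the same group topology. The first family is the defining base $\{nA \mid n \in \mathbb N\}$; the index $a \in A$ in the statement is vacuous. So the plan is to verify two things: that $\{n!A\}$ is itself a valid base of neighbourhoods of $0$ for a group topology, and that the two families are mutually cofinal under inclusion. The key elementary observation is that $m \mid k$ implies $kA \subset mA$, since $ka = m\bigl((k/m)a\bigr)$.

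First I will check that $\{nA\}$ satisfies the standard axioms for a neighbourhood base at $0$ of a group topology on an abelian group. Each $nA$ is a subgroup, because $nA$ is the image of the endomorphism $x \mapsto nx$. Hence $nA - nA \subset nA$, and invariance under conjugation is automatic since $A$ is abelian. For the filter-base condition, I will use
$$
(nm)A \subset nA \cap mA,
$$
which follows from the observation above. The same reasoning applies to $\{n!A\}$, which is moreover a decreasing chain, since $n! \mid (n+1)!$ gives $(n+1)!A \subset n!A$.

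Next I will show cofinality in both directions. Each $n!A$ already belongs to the family $\{nA\}$, so every member of the second family is a member of the first. Conversely, since $n \mid n!$, we have $n!A \subset nA$, so every $nA$ contains some $n!A$. Mutually cofinal filter bases of subgroups define the same group topology, which gives the claim.

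I expect no real obstacle here. The only point needing care is the remark that the pro-Burnside topology was defined by the family $\{nA\}$, so the first half of the statement amounts to checking that this family satisfies the filter-base axioms. That check is the intersection inclusion $(nm)A \subset nA \cap mA$.
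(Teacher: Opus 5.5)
Your proposal is correct and follows the paper's argument. The paper likewise uses the two cofinality facts: each $n!A$ equals $kA$ with $k = n!$, hence is open, and $n!A \subset nA$. Your extra check that $\{nA\}$ satisfies the filter-base axioms via $(nm)A \subset nA \cap mA$ is a detail the paper simply takes as part of the definition.
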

\begin{proof}
The family $\{nA \mid n \in \mathbb N\}$ is a neighbourhood base of $0$ in the pro-Buenside topology by definition.
Now we prove that the family $\{n!A \mid n \in \mathbb N\}$ is a neighbourhood base of $0$ in this topology as well.
Indeed, on the one hand, $n!A = kA$ for $k=n!$, thus
the subsets $n!A$ are open in the pro-Burnside topology.
On the other hand, as $n!A \subset nA$, $0$ is contained in any open subset
with some neighbourhood of the family $\{n!A\mid n \in \mathbb N\}$. 
\end{proof}

Note that the subsets $n!A$ are included one into another:
$$
A \supset 2A \supset 3!A \supset 4!A \supset \ldots.
$$

Likewise, we consider a group topology on a nilpotent group $G$
the neighbourhoods of $1$ in which are $G^n$. We call such topology
{\it pro-Burnside} or {\it factor-adic} as well.
As previously mentioned, we can also choose the family $\{G^{n!}\}$
to be the neighbourhood base of $1$ in this topology. Let us however note that the subsets $G^n$ 
are not subgroups. But this topology can still be viewed as a linear topology
(that is, a topology which admits a neighbourhood base of $0$ consisting of subgroups)
by virtue of the following proposition.
\begin{prop}
Suppose that $G$ is a nilpotent group of class $s$.
Then the family $\left\langle G^{n^s} \right\rangle$
can be taken to be the neighbourhood base of $1$
in the pro-Burnside topology.
\end{prop}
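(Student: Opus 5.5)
We show that the family $\left\langle G^{n^s}\right\rangle$, $n\in\mathbb N$, is cofinal with the family $G^n$ in both directions. The tool is the fact already used in the proof about elements of infinite height: in a nilpotent group of class $s$,
$$
\left\langle G^{n^s}\right\rangle \subset G^n .
$$
Together with the trivial inclusion $G^{n^s}\subset \left\langle G^{n^s}\right\rangle$, this gives the chain
$$
G^{n^s}\subset \left\langle G^{n^s}\right\rangle \subset G^n .
$$
The first inclusion says that every basic neighbourhood $\left\langle G^{n^s}\right\rangle$ contains a basic pro-Burnside neighbourhood $G^{n^s}$ of $1$. The second says that every basic neighbourhood $G^n$ contains some $\left\langle G^{n^s}\right\rangle$.

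The steps are then as follows.

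First, we check that each $\left\langle G^{n^s}\right\rangle$ is open in the pro-Burnside topology. It is a subgroup, and it contains the neighbourhood $G^{n^s}$ of $1$. In a group topology, a subgroup containing a neighbourhood of the identity is open, since it is the union of the translates $gG^{n^s}$ over its elements $g$.

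Second, let $U$ be an open set containing $1$. Then $U\supset G^n$ for some $n$, and hence $U\supset\left\langle G^{n^s}\right\rangle$ by the chain above. These two facts together are exactly the statement that the family $\left\langle G^{n^s}\right\rangle$ is a neighbourhood base of $1$.

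One could also restrict to a cofinal chain of subgroups, for example $\left\langle G^{(n!)^s}\right\rangle$, which are nested as $n$ grows. This gives a linear topology base with nested members, matching the remark made for the abelian case.

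The main obstacle is the inclusion $\left\langle G^{n^s}\right\rangle\subset G^n$ itself. The paper invokes it without proof, so we may take it as known. It is a classical consequence of the Hall–Petresco collection formula, or of Mal'cev/Chernikov-type results on powers in nilpotent groups. If a justification is wanted, we would argue by induction on $s$. Products of $n^s$-th powers are $n^{s-1}$-th powers modulo $\gamma_s(G)$, and the correction term is a product of commutators in $\gamma_s(G)$. Such commutators are central and multilinear, so they can be rewritten as $n$-th powers of central elements and absorbed into the leading power.

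A secondary point is that the pro-Burnside topology on a nonabelian nilpotent group is really a group topology. The paper takes this as part of its definition, and the argument above only uses that the sets $G^n$ form a base at $1$ and that translations are homeomorphisms.
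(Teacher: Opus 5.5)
Your proposal is correct and is the paper's argument: the paper's whole proof is the chain $G^{n^s}\subset\left\langle G^{n^s}\right\rangle\subset G^n$, and your two steps (a subgroup containing a neighbourhood of $1$ is open, and each $G^n$ contains $\left\langle G^{n^s}\right\rangle$) just spell out why that chain suffices. Your optional sketch of $\left\langle G^{n^s}\right\rangle\subset G^n$ is loose, though. The induction hypothesis does not give that products of $n^s$-th powers are $n^{s-1}$-th powers modulo $\gamma_s(G)$, and an arbitrary element of $\gamma_s(G)$ need not be an $n$-th power. The clean version applies the class-$(s-1)$ case to $K/\gamma_s(K)$ with $K=\left\langle g_1^n,\dots,g_k^n\right\rangle$; there multilinearity gives $\gamma_s(K)\subseteq\{d^{n^s}\mid d\in\gamma_s(G)\}$, so the correction term is an $n$-th power of a central element.
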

\begin{proof}
The following inclusions hold in the nilpotent group $G$ of class $s$:
$$
G^{n^s} \subset \left\langle G^{n^s} \right\rangle \subset G^n.
$$
\end{proof}

\begin{rem}
We can also choose the neighbourhood base of $1$ to consist of subgroups of the form
$\left\langle G^{n} \right\rangle$
or to consist of subgroups of the form
$\left\langle G^{n!} \right\rangle$.
\end{rem}

Let us also stress the case, in which the pro-Burnside topology
on a group is Hausdorff.

\begin{prop}
The pro-Burnside topology on a reduced nilpotent torsion-free group
is Hausdorff.
\end{prop}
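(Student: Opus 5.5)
Since the subgroups $\left\langle G^{n} \right\rangle$ form a neighbourhood base of $1$ (by the preceding proposition and remark), the pro-Burnside topology is Hausdorff if and only if the intersection
$$
D = \bigcap_{n \in \mathbb N} \left\langle G^{n} \right\rangle = \bigcap_{n\in\mathbb N} G^{n}
$$
is trivial. The second equality holds because of the inclusions $G^{n^s} \subset \langle G^{n^s}\rangle \subset G^n$. So the plan is to show that $D$ is a divisible subgroup of $G$. Reducedness then forces $D=\{1\}$.

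First, $D$ is a normal subgroup, being an intersection of the normal (indeed fully invariant) subgroups $\langle G^n\rangle$. Next, take $g \in D$ and a natural number $k$. For every $n$ we have $g \in G^{kn}$, so $g = h_n^{kn}$ for some $h_n \in G$. In a torsion-free nilpotent group, extraction of roots is unique: $x^k = y^k$ implies $x=y$ (a classical fact of Mal'cev/Chernikov). Put $u = h_n^{n}$. Then $u^k = g$, and by uniqueness $u$ does not depend on $n$: for any $n, n'$, the elements $h_n^n$ and $h_{n'}^{n'}$ both have $k$-th power equal to $g$, so they coincide. Hence $u = h_n^{n} \in G^n$ for every $n$, i.e. $u \in D$ and $u^k = g$. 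Therefore $D$ is divisible.

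Since $G$ is reduced, $D$ is trivial, and the topology is Hausdorff: any $g \ne 1$ lies outside some neighbourhood $\langle G^n \rangle$ of $1$, and cosets separate points in a linear topology.

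The main obstacle is the uniqueness of roots in torsion-free nilpotent groups. This is what makes the roots chosen separately for each $n$ fit together; without it one only gets $g \in G^{kn}$ but not a single $k$-th root lying in $D$. I would cite it as a standard result. If a self-contained argument is needed, I would prove it by induction on the class: the factors $Z_{i+1}(G)/Z_i(G)$ of a torsion-free nilpotent group are torsion-free. If $x^k = y^k$, then $x^{k}$ commutes with $y$, hence so does $x$ by the same fact applied one class lower. The conclusion $x=y$ then follows from $(xy^{-1})^k = 1$ and torsion-freeness.
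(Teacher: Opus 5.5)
Your proof is correct and uses the same key fact as the paper: uniqueness of roots in a torsion-free nilpotent group, which makes the roots chosen for different exponents agree. The paper applies it to a single $g\neq 1$ in $\bigcap_n G^{n!}$, getting a chain $g_n=g_{n+1}^{n+1}$ that generates a nontrivial divisible subgroup. You instead show that the whole intersection $D=\bigcap_n G^n$ is a divisible subgroup, which needs the extra identity $\bigcap_n\langle G^n\rangle=\bigcap_n G^n$. In return, your version directly proves that $\bigcap_n\langle G^{n!}\rangle$ is divisible, which the paper invokes with only a brief justification in the proof of the corollary on countable groups.
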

\begin{proof}
Suppose that $G$ is such group.
Then $\bigcap\limits_{n \in \mathbb N}G^{n!} = \{1\}$, as else there exist such elements $1 \neq g_1, g_2, g_3, \ldots\in G$ that
$$
g_1 = g_2^2 = g_3^{3!} = g_4^{4!} = \ldots.
$$
Since taking an $n$th root of an element of a nilpotent torsion-free group are unique,
it follows that $g_2 = g_3^3$, $g_3 = g_4^4$ and so on.
Such elements generate a non-trivial divisible subgroup of $G$,
which is forbidden by hypothesis.
Thus, the pro-Burnside topology on $G$ is Hausdorff.

\end{proof}

Recall that a topological group $G$ is called Raikov complete if any
Cauchy filter on $G$ converges to some element of $G$,
where a Cauchy filter on a topological group $G$ is such filter $\eta$
that for any neighbourhood $V\subset G$ of unity there are such elements
$a,b \in G$ that $aV\in \eta$ and $Vb \in \eta$ \cite{AT08}.
Since the pro-Burnside topology on a group is first-countable,
hence is generated by some invariant prenorm \cite{AT08},
Raikov completeness in this case
can be formulated in more simple terms.

\begin{prop}\label{ProBurn}
The following conditions on a nilpotent group $G$ are equivalent:
\begin{enumerate}
\item $G$ is Raikov complete in the pro-Burnside topology.
\item $G$ is complete with respect to any invariant prenorm, which generates the pro-Burnside topology.
\item Any sequence $g_1,g_2,\ldots \in G$,
which stabilizes modulo
$ \left\langle
G^{n!}
\right\rangle$
for any natural number $n$,
has a limit in the pro-Burnside topology.
\end{enumerate}
\end{prop}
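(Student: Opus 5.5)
The plan is to reduce everything to the standard theory of first-countable groups with a linear topology. The key fact is that the subgroups $N_n=\langle G^{n!}\rangle$ form a nested, countable base of neighbourhoods of $1$. By the previous proposition and the remark, this family is a base. It is nested because $G^{(n+1)!}\subset G^{n!}$, and each $N_n$ is normal, indeed fully invariant, being generated by a verbal set. So the pro-Burnside topology is a group topology given by a decreasing chain of normal subgroups. It therefore admits a two-sided invariant prenorm, for instance $\|g\|=2^{-n}$ where $n$ is maximal with $g\in N_n$ (with $\|g\|=0$ if $g\in\bigcap N_n$). The left and right uniformities coincide, since every basic neighbourhood $N_n$ satisfies $gN_n=N_ng$. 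I will prove $1\Rightarrow 2\Rightarrow 3\Rightarrow 1$.

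\emph{$1\Rightarrow 2$.} Take any invariant prenorm $\nu$ generating the topology and a $\nu$-Cauchy sequence $(g_k)$. The filter generated by its tails is a Cauchy filter in the Raikov sense. Indeed, given a neighbourhood $V$ of $1$, choose $\varepsilon$ with $\{\nu<\varepsilon\}\subset V$. For $k\ge K$ we have $\nu(g_K^{-1}g_k)<\varepsilon$, and by invariance also $\nu(g_kg_K^{-1})<\varepsilon$, so the tail lies in both $g_KV$ and $Vg_K$. By Raikov completeness the filter converges, hence the sequence converges.

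\emph{$2\Rightarrow 3$.} Apply condition 2 to the prenorm $\|\cdot\|$ above. A sequence that stabilizes modulo every $N_n$ satisfies the following: for each $n$ there is $K$ with $g_k^{-1}g_l\in N_n$ for $k,l\ge K$. This says exactly that the sequence is $\|\cdot\|$-Cauchy, so it has a limit.

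\emph{$3\Rightarrow 1$.} This is the main step. Let $\eta$ be a Cauchy filter. For each $n$ pick $a_n$ with $a_nN_n\in\eta$. Since $\eta$ is a filter, $a_nN_n\cap a_mN_m\neq\varnothing$. Because the cosets are nested, for $m\ge n$ this gives $a_mN_m\subset a_mN_n=a_nN_n$, so $a_m\in a_nN_n$. Thus $(a_n)$ stabilizes modulo each $N_n$, and by condition 3 it has a limit $g$. We must check that $\eta$ converges to $g$, i.e.\ that every $gN_n$ lies in $\eta$. For large $m$ we have $a_m\in gN_n$, hence $a_mN_m\subset a_mN_n=gN_n$. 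Since $a_mN_m\in\eta$, it follows that $gN_n\in\eta$.

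The main subtlety is the bookkeeping of left versus right cosets together with the fact that the neighbourhoods $G^{n!}$ themselves are not subgroups. Both issues are removed by working with the normal subgroups $N_n$ and invoking the previous proposition to know they form a base. One also has to note that "stabilizes modulo $N_n$" is the same whether cosets are taken on the left or the right, which again follows from normality.
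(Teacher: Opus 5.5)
Your proof is correct and follows essentially the same route as the paper. Both arguments use the nested normal subgroups $\langle G^{n!}\rangle$, extract from a Cauchy filter a sequence $a_n$ with $a_n\langle G^{n!}\rangle$ in the filter, and pass convergence between that sequence and the filter exactly as in your $3\Rightarrow 1$. The only differences are organisational: you prove the implications cyclically, construct an explicit invariant prenorm, and use the tail filter for $1\Rightarrow 2$; the paper instead shows that Cauchy sequences for any generating invariant prenorm are exactly the sequences stabilizing modulo $\langle G^{n!}\rangle$, and it uses the filter generated by the cosets $g_n\langle G^{n!}\rangle$.
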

\begin{proof}

Suppose that $\mathfrak n$ is an invariant prenorm on $G$ which generates the pro-Burnside topology.
Let $B_n = \left\{ h \in G \mid \mathfrak n(h) < \dfrac{1}{n}\right\}$,
$V_n = \left\langle
G^{n!}
\right\rangle$.

Firstly, prove that Cauchy sequences with respect to the prenorm $\mathfrak n$
and sequences, which stabilize modulo any $V_n$,
are the same sequences. %
For this we note that a sequence $g_1, g_2, \ldots \in G$
is Cauchy with respect to the prenorm $\mathfrak n$
if and only if
for any $n\in \mathbb N$
there is such number $N\in \mathbb N$ that
$g_{n_1} \in g_{n_2} B_n$ if $n_1, n_2 > N$.
Meanwhile, the same sequence stabilizes modulo $V_n$, %
if there is such number $N\in \mathbb N$ that for $n_1, n_2 > N$
we have $g_1, g_2 \in g_N V_n$. As $V_n$  is a subgroup, it follows that
$g_1 \in g_2 V_n$.
It remains to note that for any neighbourhood $B_n$ there is a neighbbourhood $V_{n'} \subset B_n$ and
for any neighbourhood $V_n$ there is a neighbourhood $B_{n'} \subset V_n$.

Now let us investigate Cauchy filters in the pro-Burnside topology.
To prove that the filter is Cauchy, one only has to check the condition for
basic neighbourhoods of unity, that is for subgroups $V_n$ (if we take the corresponding neighbourhood base).
Since $gV_n = V_ng$, the filter $\eta$ is Cauchy
if and only if
for any $n\in \mathbb N$
there is an element $g_n\in G$ such that $g_nV_n \in \eta$.
In particular, for any natural numbers $n<m$
$g_nV_n\cap g_mV_m \neq \varnothing$. Since $V_m \subset V_n$, whereas $V_n$ is a subgroup,
it follows that $g_m \in g_nV_n$ for $m>n$.
Hence the sequence $g_1, g_2, g_3, \ldots$ stabilizes modulo
$V_n$ for each $V_n$ (and, equivalently, is Cauchy with respect to $\mathfrak n$).
At the same time the convergence of the filter $\eta$ to an element $g$ means that
for any $n$ $gV_n\in \eta$, hence, repeating the argument above, we get that for $m>n$ $g_m \in g V_n$.
In other words, $g_n \to g$.

So, if any Cauchy sequence converges to some element, the same is true
for Cauchy filters.
The converse is also true, as from a Cauchy sequence $g_1, g_2, \ldots \in G$
one can construct a filter $\eta$,
generated by neighbourhoods $g_1V_1, g_2V_2, \ldots$,
which is a Cauchy filter.
If the filter $\eta$ converges to $g$, then (again, by virtue of the argument above)
$g_n \to g$, which finishes the proof.

\end{proof}

Let us call a nilpotent group $G$
{\it pro-Burnside} if it satisfies any condition from Proposition
\ref{ProBurn}.
For torsion-free nilpotent groups
the property of being pro-Burnside is inherited by
the center and the quotient group by the center.

\begin{lemma} \label{probounded}
Suppose that torsion-free nilpotent group $G$
is pro-Burnside.
Then $Z(G)$ and $G/Z(G)$ are pro-Burnside as well.
\end{lemma}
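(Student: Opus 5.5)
We prove the two claims separately. Both rest on condition 3 of Proposition \ref{ProBurn} and on the standard fact that in a torsion-free nilpotent group $n$th roots are unique and commuting is preserved: $x^n y = y x^n$ implies $xy=yx$. Throughout, $s$ is the nilpotency class of $G$, and we write $V_n = \langle G^{n!}\rangle$ as in the proof of Proposition \ref{ProBurn}.

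\emph{The quotient $G/Z(G)$.} Let $\bar g_1,\bar g_2,\ldots$ be a sequence in $\bar G = G/Z(G)$ that stabilizes modulo $\langle \bar G^{n!}\rangle$ for every $n$. Since $\bar G^{k}$ is exactly the image of $G^{k}$, the subgroup $\langle \bar G^{n!}\rangle$ is the image of $V_n$. We pass to a subsequence with $\bar g_{i+1}\in \bar g_i\langle\bar G^{i!}\rangle$ for all $i$.
We then lift inductively: choose $g_1$ arbitrarily, and having $g_i$, choose $g_{i+1}\in g_iV_i$ lying over $\bar g_{i+1}$. This is possible because $\bar g_i^{-1}\bar g_{i+1}$ lies in the image of $V_i$, so some $v\in V_i$ maps to it and $g_{i+1}=g_iv$ works. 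The lifted sequence stabilizes modulo every $V_n$, so by hypothesis it has a limit $g$. Its image $\bar g$ is a limit of the subsequence, because the quotient map is continuous in the pro-Burnside topologies. A Cauchy sequence with a convergent subsequence converges to the same point, so $\bar g_i\to\bar g$ and $G/Z(G)$ is pro-Burnside.

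\emph{The centre $Z=Z(G)$.} Let $z_1,z_2,\ldots$ in $Z$ stabilize modulo $n!Z$ for every $n$. Since $n!Z\subset G^{n!}\subset V_n$, the sequence is Cauchy in $G$ and has a limit $g\in G$. Two things must be shown:
\begin{enumerate}
\item $g$ is central;
\item $z_i\to g$ in the topology of $Z$ itself, i.e.\ $g^{-1}z_i\in n!Z$ for large $i$.
\end{enumerate}

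For the first, fix $n$ and take $i$ large. Then $g^{-1}z_i$ lies in $V_{m}$, where $m$ is chosen so that $V_m\subset G^{n!}$; such an $m$ exists since $\langle G^{k^s}\rangle\subset G^k$. So $g^{-1}z_i=h^{n!}$ for some $h\in G$, and $g=z_ih^{-n!}$. Hence $[g,x]=[h^{-n!},x]$ for every $x$, which lies in the set $[G^{n!},G]$ for every $n$. The aim is to show that such commutators vanish.

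Here is the key step: $g^{-1}z_i=h^{n!}$ with $z_i$ central gives $(h^{n!})^{-1}\cdot g^{-1}\cdot\ldots$, which is not directly useful, so instead we argue by induction on the class via the quotient part. The image of $g$ in $G/Z$ is a limit of the constant sequence $\bar 1$. The pro-Burnside topology on $G/Z$ is Hausdorff, because $G/Z$ is torsion-free and its divisible part should be controlled. Hausdorffness then gives $g\in Z$.

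The main obstacle is exactly this Hausdorff/uniqueness issue. $G$ may contain divisible elements, and then limits are not unique. The way around it is to replace $g$ by a suitable element of the coset $g\bigcap_n V_n$. Let $D=\bigcap_n V_n$; it consists of elements of infinite height and is the maximal divisible subgroup. We use the decomposition of the isolator structure: in a torsion-free nilpotent group, the roots $h=(g^{-1}z_i)^{1/n!}$ are unique. So for the second claim we show that $h$ commutes with every $x$. Indeed, $h^{n!}=g^{-1}z_i$ and $g^{-1}z_j$ differ by the central element $z_i^{-1}z_j\in n!Z$, say $z_i^{-1}z_j=c^{n!}$ with $c\in Z$. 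Then $(hc)^{n!}=g^{-1}z_j$, and uniqueness of roots makes $hc$ the $n!$th root of $g^{-1}z_j$. Letting $j$ vary and comparing over different $n$ yields a coherent system of roots. We expect that taking $g'=z_i h^{-n!}$ suitably, or modifying $g$ by an element of $D$, produces a central limit $g'$ with $g'^{-1}z_i\in n!Z$, using again that $x^{k}$ central implies $x$ central. Verifying this compatibility is the step I expect to require the most care.
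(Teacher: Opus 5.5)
Your argument for $G/Z(G)$ is correct and is essentially the paper's. The paper lifts arbitrarily and then corrects by the accumulated central factors $z_1\cdots z_{n-1}$. You lift directly inside the cosets $g_iV_i$, which amounts to the same thing, and neither version uses torsion-freeness.

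The half about $Z(G)$ has a genuine gap: neither claim 1 nor claim 2 is ever proved. The step ``the pro-Burnside topology on $G/Z$ is Hausdorff'' is unsupported, and it fails whenever $G/Z(G)$ has nontrivial divisible elements. The proposed repair, moving $g$ inside $g\bigcap_nV_n$ to a central element, is only announced. It cannot be carried out, because without an extra hypothesis the $Z(G)$ half of the statement is false.

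Here is a counterexample. Let $\widehat{\mathbb Z}=\prod_p\mathfrak Z_p$; it is torsion-free and complete with respect to the subgroups $n\widehat{\mathbb Z}$. Let $W=\widehat{\mathbb Z}/\mathbb Z$. It is divisible because $\mathbb Z$ is dense, torsion-free because $n\widehat{\mathbb Z}\cap\mathbb Z=n\mathbb Z$, and nonzero, so it is a nonzero $\mathbb Q$-vector space. Write $\bar t$ for the image of $t$ in $W$, and on $G=\widehat{\mathbb Z}\times\mathbb Q\times W$ put
\[
(t,y,w)\,(t',y',w')=\bigl(t+t',\;y+y',\;w+w'+y'\,\bar t\bigr).
\]
This is a torsion-free nilpotent group of class $2$, with $(t,y,w)^n=\bigl(nt,\,ny,\,nw+\binom{n}{2}y\bar t\bigr)$. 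Hence $G^n=n\widehat{\mathbb Z}\times\mathbb Q\times W$, the preimage of $n\widehat{\mathbb Z}$ under projection to the first coordinate, and so $G$ is pro-Burnside.

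The commutator of $(t,y,w)$ and $(t',y',w')$ is $(0,0,y'\bar t-y\bar t')$. Therefore $Z(G)=\mathbb Z\times0\times W\cong\mathbb Z\oplus W$, and its pro-Burnside topology is the profinite topology on the $\mathbb Z$-summand. Choose integers $z_i$ converging in $\widehat{\mathbb Z}$ to some $t_0\notin\mathbb Z$. They form a Cauchy sequence in $Z(G)$ with no limit there, so $Z(G)$ is not pro-Burnside. Their limits in $G$ are exactly the elements $(t_0,y,w)$, none of which is central, so no choice of limit can be made central. Note also that $G/Z(G)\cong W\oplus\mathbb Q$ is divisible, hence not Hausdorff.

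The paper's own proof of this half has the same hole you ran into. It says $Z(G)$ is closed as an intersection of the sets $\{x\mid[g,x]=1\}$. These are preimages of $\{1\}$, which is closed only when the topology is Hausdorff; in the example $Z(G)$ is actually dense in $G$.

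If you add the hypothesis that $G$ is reduced, the topology is Hausdorff by the paper's Proposition, and your own computation finishes the proof:
\begin{itemize}
\item For claim 1, $[g,x]=[(g^{-1}z_i)^{-1},x]\in V_m$ for every $m$, so $[g,x]\in\bigcap_mV_m=\{1\}$.
\item For claim 2, $g^{-1}z_i\in Z(G)\cap G^{n!}=Z(G)^{n!}$ for large $i$.
\end{itemize}
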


\begin{proof}
Note that a torsion-free nilpotent group satisfies the condition
$Z(G) \cap G^n = Z(G)^n$,
hence the topology on $Z(G)$, induced by the pro-Burnside topology on
$G$, coincides with the pro-Burnside topology on $Z(G)$.
Since $Z(G)$ is a closed subgroup of a complete pseudometric group $G$
(indeed, $Z(G)$ is an intersection of closed subsets $\{x \in G \mid [g,x] = 1\}$
for all elements $g \in G$),
$Z(G)$ is complete in the induced topology, which coincides with the pro-Burnside topology of $Z(G)$.
This means that $Z(G)$ is a pro-Burnside group.

Now prove that the quotient group $G/Z(G)$ is pro-Burnside.
Suppose that a sequence $\bar g_1, \bar g_2 , \ldots \in G/Z(G)$ stabilizes modulo
$\left\langle
\left(
G/Z(G)
\right)^{n!}
\right\rangle$
for any natural number $n$.
By taking a subsequence, we can assume that
$$
\bar g_n = \bar g_{n+1} \mod \left\langle
\left(
G/Z(G)
\right)^{n!}
\right\rangle.
$$

Consider some preimages $g_n \in G$ of the elements $\bar g_n \in G/Z(G)$
under taking the quotient by $Z(G)$.
We get that for any $n$
$$
g_n =g_{n+1} z_n \mod \left\langle
G^{n!}
\right\rangle,
$$
where $z_n \in Z(G)$.
Now take the sequence
$$
h_1 = g_1, h_2 = g_2 z_1, h_3 = g_3 z_2 z_1, \ldots.
$$
Note that
$$
h_n h_{n+1}^{-1} = g_n g_{n+1}^{-1} z^{-1}_{n} = z_n z_n^{-1} = 1 \mod \left\langle
G^{n!}
\right\rangle.
$$
As $G$ is pro-Burnside, it follows that $h_n \to g$ for some element $g \in G$.
Moreover,
$$
h_n = g \mod \left\langle
G^{n!}
\right\rangle
$$
for any $n \in \mathbb N$, since
$$
h_n = h_{n+1} = h_{n+2} = \ldots \mod \left\langle
G^{n!}
\right\rangle.
$$

The equations also hold after taking the quotient by $Z(G)$:
$$
\bar h_n = \bar g \mod
\left\langle
\left(
G/Z(G)
\right)^{n!}
\right\rangle,
$$
where $\bar h_n, \bar g \in G/Z(G)$ are respective images of the elements
$h_n, g\in G$ under taking the quotient by $Z(G)$.
Therefore, $\bar h_n \to \bar g$ in the pro-Burnside topology of $G/Z(G)$.
It remains to notice that $\bar h_n = \bar g_n$, as
$h_n \in g_n Z(G)$. Hence, the sequence $\bar g_n \in G/Z(G)$
has a limit, as needed.

\end{proof}

\section{Torsion-free groups}\label{SectionTorsionFree}

A torsion-free nilpotent group being pro-Burnside
is equivalent to this group containing a solution of any
unimodular system of equations over itself.
First we show this for abelian groups.

\begin{lemma}\label{torsionfree}
Suppose that $A$ is a torsion-free abelian group.
Then $A$ contains a solution of any unimodular system of equations over itself
if and only if
$A$ is pro-Burnside.
\end{lemma}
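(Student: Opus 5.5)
We prove the two implications separately. Throughout, $A$ is written additively. A system of equations over $A$ then becomes a linear system $Mx=a$, where $M$ is the (row-finite) integer matrix of exponent sums and $a$ is a column of elements of $A$.

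\emph{Pro-Burnside implies solvability.} Let $Mx=a$ be unimodular. The idea is to solve the system modulo each $n!A$ and pass to a limit using completeness.

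First we handle each finite level. The quotient $A/n!A$ has bounded period. By the theorem from \cite{M25} on nilpotent groups of bounded period, every unimodular system over $A/n!A$ is solvable in $A/n!A$ itself. This gives solutions modulo $n!A$ for every $n$.

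These level-wise solutions need not be compatible, so we cannot simply take a limit. The main obstacle is to choose them compatibly: a solution $\xi_{n+1}$ modulo $(n+1)!A$ should reduce modulo $n!A$ to $\xi_n$.

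To arrange this we lift inductively. Suppose $\xi_n$ solves the system modulo $n!A$. Pick any lift $\hat\xi_n$ of it to $A$. Then every entry of $a - M\hat\xi_n$ lies in $n!A$, so we may write $a - M\hat\xi_n = n!\,b$ for some column $b$ over $A$. Now solve $M\eta = b$ modulo $(n+1)A$, which is possible by bounded period again, and set $\xi_{n+1}=\hat\xi_n+n!\,\eta$. Then
$$M\xi_{n+1} = M\hat\xi_n + n!\,M\eta \equiv a \pmod{n!\,(n+1)A},$$
and $n!\,(n+1)A=(n+1)!A$, as required.

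With compatible solutions in hand, each coordinate sequence $(\xi_n)_j$ stabilizes modulo every $k!A$. By the pro-Burnside property (Proposition \ref{ProBurn}), each coordinate has a limit $x_j\in A$.

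It remains to check that $x=(x_j)$ solves the system. Each equation involves only finitely many variables, so for every $k$ we have $Mx - a \in k!A$. Hence $Mx-a\in\bigcap_k k!A$, and we still need this intersection to be trivial.

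In a torsion-free abelian group, $\bigcap_k k!A$ is the maximal divisible subgroup $D$ (compare the Hausdorff proposition above). This is where we use the splitting $A=D\oplus R$ with $R$ reduced. Both summands inherit completeness, the former since $D$ sits inside $\bigcap_k k!A$. On $D\cong\bigoplus\mathbb Q$, a unimodular (indeed nonsingular) system is solvable by the divisible-group theorem of \cite{M25}. So we split the system into its $D$- and $R$-components and run the limit argument only on $R$, where the intersection is $0$ and the limit is an exact solution.

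\emph{Solvability implies pro-Burnside.} Let $g_1,g_2,\ldots$ stabilize modulo each $n!A$. After passing to a subsequence we may assume $g_{n+1}-g_n = n!\,c_n$ for some $c_n\in A$. Consider the system
$$x_n - (n+1)\,x_{n+1} = c_n \quad (n\in\mathbb N),$$
modelled on the system in Lemma \ref{ulmbad}. Its matrix is upper bidiagonal with $1$'s on the diagonal, so it is $p$-nonsingular for every $p$, i.e.\ unimodular. Let $(y_n)$ be a solution in $A$.

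Telescoping gives $n!\,y_n - (n+1)!\,y_{n+1} = n!\,c_n = g_{n+1}-g_n$. Put $g = g_1 + y_1$. Summing the telescoped identities yields $g_{n+1} = g - (n+1)!\,y_{n+1}$, that is,
$$g_{n+1}\equiv g \pmod{(n+1)!A}.$$
Hence $g_n\to g$, and $A$ is pro-Burnside.

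The delicate point of the whole proof is the compatible lifting step in the first implication, together with the final passage from the limit to an exact solution when $D\neq0$.
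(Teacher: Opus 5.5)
Your proof is correct and takes essentially the paper's route. You solve modulo $n!A$ by successive approximation, using solvability of unimodular systems over groups of bounded period, and pass to the limit by completeness. You then dispose of $\bigcap_k k!A$, which equals the divisible part, by splitting it off. For the converse you use the same bidiagonal system $x_n-(n+1)x_{n+1}=c_n$, whose telescoped form produces the limit.

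The differences are cosmetic. The paper splits off the divisible part at the outset. It solves each correction directly over the subgroup $(k-1)!A$ modulo $k!A$, instead of dividing by $n!$ and solving over $A$ modulo $(n+1)A$; these are equivalent because $A$ is torsion-free. It also argues the converse by contradiction, where you exhibit the limit $g_1+y_1$ directly.
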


\begin{proof}
The abelian group $A$ can be decomposed into a direct sum $A = B \oplus C$,
where $B$ is divisible and $C$ is reduced.
Since the divisible abelian group $B$ contains a solution of any nonsingular system of equations over $B$,
$A$ contains a solution of any unimodular system of equations over itself
if and only if
the same is true for $C$.
On the other hand, as $nB = B$ for any $n \in \mathbb N$,
one can see that
$A$ is complete in its pro-Burnside topology (viewed as a topology generated by a prenorm)
if and only if
$C$ is complete in the pro-Burnside topology of $C$.
As a conclusion, $A$ can be assumed to be reduced.
Particularly, the pro-Burnside topology on $A$
is Hausdorff, and the limit of any convergent sequence in $A$ is unique.
 
Suppose, firstly, that $A$ is pro-Burnside.
Consider a unimodular system of equations
$$
\left\{m_i = a_i \mid  i \in I\right\},
$$
where $m_i$ is a linear combination of variables $\{x_j\}$ with coefficients from $\mathbb Z$ and $a_i \in A$.
This system has a solution modulo $2A$, since the system is unimodular, while $A/2A$ is an abelian group of bounded period.
In other words, there is a set of elements $\{\alpha_{j2}\}$ such that
$$
\mu_{i2} = a_i \mod{2A},
$$
where $\mu_{i2}$ is $m_i$ after substituting $x_j$ with $\alpha_{j2}$.
So we have $a_{i2} : = a_i - \mu_{i2} \in 2A$.
Now consider the system
$$
\left\{ m_i = a_{i2} \right\}.
$$
As $a_{i2} \in 2A$, the new system can be viewed as a system of equations over $2A$.
It is unimodular as well, therefore it has a solution modulo $3 \cdot 2A = 3!A$.
That is, there are elements $\alpha_{j3} \in 2A$ such that
$$
\mu_{i3} = a_{i2} \mod{3!A}.
$$
Now note that is $x_j = \alpha_{j2} + \alpha_{j3}$, then
$$
m_i = \mu_{i2} + \mu_{i3} = a_{i} - a_{i2} + a_{i2} = a_i \mod{3!A}.
$$
Likewise, we get elements $\alpha_{j4} \in 3!\cdot A, \alpha_{j5} \in 4!\cdot A, \ldots, \alpha_{jk} \in (k-1)!\cdot A$
such that,
if $x_j = \alpha_{j2} + \ldots + \alpha_{jk}$, then
$$
m_i = a_i \mod {k! A}.
$$
Now take the series $\tilde x_j = \sum\limits_{k=2}^\infty \alpha_{jk}$,
which converge because $\alpha_{jk} \in (k-1)!A$ and $A$ is pro-Burnside.
After substituting $x_j$ in $m_i$ with $\tilde x_j$, we get that
$$
m_i = a_i \mod {k!A}
$$
for any natural number $k$.
Since pro-Burnside topology on $A$ is Hausdorff,
it means $m_i = a_i$, thus $\tilde x_j$ is a needed solution of the system.

Now suppose that $A$ is not pro-Burnside.
In this case there is a sequence
$a_1, a_2 , \ldots \in A$, which stabilizes modulo $n!A$ for every $n$
but has no limit.
Taking a subsequence, we can assume that
$$
a_n - a_{n+1} \in (n+1)!A.
$$
This is equivalent to the fact that the series
$\sum\limits_{n = 1}^\infty n!b_n$, where $n!b_n = a_{n-1} - a_{n} \in n!A$, does not converge.
Consider then the system of equations
$$
\begin{cases}
x_1 - 2x_2 = b_1, \\
x_2 - 3 x_3 = b_2, \\
x_3 - 4x_4 = b_3, \\
\ldots
\end{cases}
$$
This system implies $x_1 - (n+1)! x_{n+1} = b_1 + 2b_2 + 3!b_3 + \ldots + n!b_n$.
Assume that $A$ contains a solution $\{\tilde x_j\}$ of this system.
Then $\tilde x_1 = \sum\limits_{n=1}^\infty n!b_n$,
as $(n+1)! \tilde x_{n+1} \to 0$.
We arrive to a contradiction with the fact that the series $\sum\limits_{n = 1}^\infty n!b_n$ has no limit.
Therefore the system has no solutions in $A$.

\end{proof}

Now we can move on to nilpotent groups,
in other words, prove Theorem
\ref{TheoremTwo} stated in the introduction.
Here we state it in pro-Burnside terms.

\begin{theorem}[In pro-Burnside terms]
A torsion-free nilpotent group $G$
contains a solution of any unimodular system of equations over itself
if and only if
$G$ is pro-Burnside.
\end{theorem}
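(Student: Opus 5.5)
We prove both directions by induction on the nilpotency class $s$ of $G$. Lemma \ref{torsionfree} gives the abelian case, and Lemma \ref{probounded} together with the central extension $Z(G)\to G\to G/Z(G)$ gives the inductive step. Before starting, we reduce to the reduced case, as in Lemma \ref{torsionfree}. A torsion-free nilpotent group has a unique maximal divisible subgroup $D$, which is normal. Then $G/D$ is reduced by Lemma \ref{reducedfactor}-type arguments. Unimodular systems over $G$ descend to $G/D$. Conversely, a solution modulo $D$ lifts, because the residual system over $D$ (after substitution) can be solved in the divisible group $D$ by the theorem of \cite{M25} on divisible nilpotent groups. For the topology, $D \subset \langle G^{n}\rangle$ for all $n$. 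If this reduction is too delicate, we avoid it and only use Hausdorffness where it is actually needed.

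\emph{Sufficiency (pro-Burnside $\Rightarrow$ solvability).} Let $W$ be a unimodular system over $G$. By Lemma \ref{probounded}, $G/Z(G)$ is torsion-free (for torsion-free nilpotent groups this is standard) and pro-Burnside of smaller class. By induction, the image of $W$ has a solution in $G/Z(G)$. Lift it to elements $g_j$ and substitute $x_j = g_j y_j$, where the new unknowns $y_j$ are sought in $Z(G)$. Since the $y_j$ are central, each equation becomes a linear equation over $Z(G)$: its left side is the integer combination of the $y_j$ given by the same exponent-sum row, and its right side is the central error term $w_i(g)^{-1}$. The exponent matrix is unchanged, so this system is unimodular over the torsion-free abelian pro-Burnside group $Z(G)$ (Lemma \ref{probounded}). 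Lemma \ref{torsionfree} then solves it, and this step is straightforward.

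\emph{Necessity (solvability $\Rightarrow$ pro-Burnside).} Suppose $G$ is not pro-Burnside, and take a sequence stabilizing modulo every $\langle G^{n!}\rangle$ without a limit. After passing to a subsequence and using the inclusions $G^{n^s}\subset\langle G^{n^s}\rangle\subset G^n$, we may write $g_{n}^{-1} g_{n+1} = c_n^{\,(n+1)!}$ with $c_n \in G$ (reindexing factorials so that powers rather than products of powers appear). Mimicking Lemma \ref{torsionfree}, consider the system
$$x_1 = g_1 c_1^{2!} x_2^{2!},\quad x_2 = c_1^{-2!}c_2^{3!/2!}\cdots$$
More cleanly, we use the unimodular chain $x_n\, x_{n+1}^{-(n+1)} = b_n$ with suitably chosen coefficients $b_n$. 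Any solution satisfies $x_1 = g_{n}\cdot(\text{element of } G^{(n+1)!}\text{-type})$. A solution would therefore exhibit $x_1$ as a limit of the $g_n$, which is a contradiction. Writing the coefficients so that the telescoping works in a noncommutative group is the main obstacle.

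The alternative we would try first avoids writing the telescoping noncommutatively: prove necessity by the same induction. If $G$ solves all unimodular systems, then so does $G/Z(G)$, since a system over the quotient lifts to one over $G$. Likewise $Z(G)$ does: given a unimodular system over $Z(G)$, solve it in $G$, and observe that in a torsion-free nilpotent group the solution can be pushed into $Z(G)$ using uniqueness of roots. This pushing step is again the delicate point. Induction then makes $Z(G)$ and $G/Z(G)$ pro-Burnside. It remains to show that an extension of a pro-Burnside quotient by a pro-Burnside center, with $Z(G)\cap G^n=Z(G)^n$, is pro-Burnside. A Cauchy sequence in $G$ has an image converging to $\bar g$ in $G/Z(G)$. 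Correcting by central elements, as in the proof of Lemma \ref{probounded}, yields a Cauchy sequence in $Z(G)$, whose limit gives the limit in $G$. This last step needs the quotient map to be open and the induced topology on $Z(G)$ to be the pro-Burnside one, both of which are available from the paper's earlier remarks.
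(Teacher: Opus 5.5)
Your sufficiency direction is correct and is essentially the paper's argument; your substitution $x_j=g_jy_j$ with central $y_j$ is a more explicit version of it. The gap is in necessity: neither of your two routes is completed, and each stalls at the step you flag.

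\emph{First route.} A chain $x_nx_{n+1}^{-(n+1)}=b_n$ with arbitrary coefficients does not telescope, since in general $x_1=b_1(b_2x_3^3)^2\neq b_1b_2^2x_3^6$.

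\emph{Second route.} The ``pushing'' of a solution into $Z(G)$ cannot be justified by uniqueness of roots. Uniqueness of solutions holds only for finite square nonsingular systems (Shmel'kin's theorem in the Mal'cev completion). For infinite or non-square systems, the image of a solution in $G/Z(G)$ solves a homogeneous system that can have nontrivial solutions. Examples are free variables, or chains $\bar x_n=\bar x_{n+1}^{\,n+1}$ inside a divisible subgroup of $G/Z(G)$. So you have not shown that $Z(G)$ is pro-Burnside, and without that your extension step (4) does not close.

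The missing idea is that you need central \emph{coefficients}, not central unknowns. The paper splits on whether $G/Z(G)$ is pro-Burnside.
\begin{itemize}
\item If $G/Z(G)$ is not pro-Burnside, induction gives an unsolvable unimodular system over $G/Z(G)$, which lifts to $G$.
\item If $G/Z(G)$ is pro-Burnside, take a Cauchy sequence $g_n$ with no limit. Multiply it by the inverse of a lift of the limit of $\bar g_n$, and correct by central elements exactly as in your step (4). This gives a Cauchy sequence $z_n\in Z(G)$ that still has no limit in $G$.
\end{itemize}
In the second case, after passing to a subsequence, $z_nz_{n+1}^{-1}\in G^{n!}\cap Z(G)=Z(G)^{n!}$, so $z_nz_{n+1}^{-1}=b_n^{\,n!}$ with $b_n\in Z(G)$. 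Now consider the unimodular system $x_nx_{n+1}^{-(n+1)}=b_n$. Its coefficients are central, so any solution in $G$, central or not, satisfies $x_n^{n!}=b_n^{\,n!}x_{n+1}^{(n+1)!}$. Hence $x_1=z_1z_{n+1}^{-1}x_{n+1}^{(n+1)!}$, and $z_n$ converges in $G$ to $x_1^{-1}z_1$, a contradiction.

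In your framework this means the corrected central sequence only has to converge in $G$, not in $Z(G)$. So the claim that $Z(G)$ solves all unimodular systems is never needed.

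Separately, your preliminary reduction to the reduced case does not work as written. After substituting $x_j=g_jy_j$ with $y_j\in D$, the elements $g_j\notin D$ remain interleaved with the $y_j$. The residual system is therefore not a system over $D$, and the divisible-group theorem does not apply directly. The reduction is unnecessary anyway.
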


\begin{proof}
We argue by induction on the nilpotency class of $G$.

For nilpotent groups of class $1$ (in other words, for abelian groups) the statement is true by Lemma \ref{torsionfree}.

Now assume that the theorem is proved for nilpotent groups of class $s-1$.
We prove it for nilpotent groups of class $s$.

Firstly, assume that $G$ is pro-Burnside.
Then, by Lemma \ref{probounded},
the quotient group $G/Z(G)$ is pro-Burnside as well.
This quotient group is also torsion-free \cite{Mal49}.
By induction, $G/Z(G)$ contains a solution of any unimodular system of equations over itself.
For the group $G$ is means that any unimodular system of equations over $G$
can be transformed into a unimodular system in which every equations has the product of all its coefficients lying in $Z(G)$.
Assume that some solution of this system lies in $Z(G)$.
In this case we can assume that the variables commute with
coefficients of equations, hence all equations can be rewritten in the form $w_i = z_i$, where $w_i$ have no coefficients, while $z_i \in Z(G)$.
Thus, we get a unimodular system of equations over $Z(G)$.
It actually has a solution in $Z(G)$, as $Z(G)$ is also pro-Burnside by Lemma
\ref{probounded}.

Now suppose that $G$ is not pro-Burnside.
There are two possibilities:
\begin{itemize}
\item The quotient group $G/Z(G)$ is not pro-Burnside. Then, by induction hypothesis,
there is a unimodular system of equations over $G/Z(G)$ with no solutions in $G/Z(G)$.
Then there is an analogous system of equations over $G$.
\item The quotient group $G/Z(G)$ is pro-Burnside.
Then consider a Cauchy sequence $g_1, g_2, g_3, \ldots$ of elements of the group $G$
which has no limit.
Consider the images $\bar g_n \in G/Z(G)$ of the elements $g_n \in G$ under taking the quotient by $Z(G)$.
The sequence $\bar g_1, \bar g_2, \bar g_3, \ldots$ is Cauchy in $G/Z(G)$, hence it has a limit.
So, multiplying all the elements $\bar g_n$ by the same element, we can assume that
$\bar g_n \to 1$.
Taking a subsequence, we can assume that
$$\bar g_n = 1 \mod \left\langle
\left(
G/Z(G)
\right)
^{n!}
\right\rangle.
$$
For the sequence $g_n \in G$ it means that
$$
g_n = z_n \mod
\left\langle
G^{n!}
\right\rangle,
$$
where $z_n \in Z(G)$.
Since $g_nz_n^{-1} \to 1$,
the sequence $z_n \in Z(G)$ is Cauchy as well, but also has no limit.

Taking the subsequence of the sequence
$z_n \in Z(G)$, we can assume that
$$
z_n = z_{n+1} \mod G^{n!},
$$
that is  $z_n z_{n+1}^{-1} = b_n^{n!}$ for some element $b_n \in G$.
As the nilpotent group $G$ is torsion-free,
$G^{n!} \cap Z(G) = Z(G)^{n!}$, hence $b_n \in Z(G)$.
Note that
$$
b_1 b_2^2 b_3^{3!} \cdot \ldots \cdot b_n^{n!} = z_1 \cdot z_2^{-1} \cdot z_2 \cdot z_3^{-1} \cdot \ldots \cdot z_n \cdot z_{n+1}^{-1} = z_1 \cdot z_{n+1}^{-1}.
$$

Now consider the system of equations
$$
\begin{cases}
x_1 x_2^{-2} = b_1, \\
x_2x_3^{-3} = b_2, \\
x_3 x_4^{-4} = b_3, \\
\ldots
\end{cases}
$$
It is easy to see that this system is unimodular.
Assume that is has a solution $\{\tilde x_n\}$  $G$.
As $\tilde x_n = b_n \tilde x_{n+1}$, $\tilde x_n^{n!} = (b_n \tilde x_{n+1})^{n!} = \tilde x_{n+1}^{n!} b_n^{n!}$ because $b_n \in Z(G)$.
Therefore, this system implies
$$
\tilde x_1 (\tilde x_{n+1})^{-(n+1)!} = b_1 \cdot b_2^2 \cdot b^{3!} \cdot \ldots \cdot b_{n}^{n!}.
$$
Thus
$$
\tilde x_1 = z_1 z_{n+1}^{-1} (\tilde x_{n+1})^{-(n+1)!}.
$$
On one hand, $(\tilde x_{n+1})^{-(n+1)!} \to 1$. On the other hand, in this case $z_{n+1}\to\tilde x_1^{-1} z_1$,
although the sequence $z_n$ has no limit.
We arrive at a contradiction, so the assumption of having a solution to the system in
$G$ is false.
\end{itemize}
So, in both of these possibilities we have a unimodular system of equatons over $G$
which has no solutions in $G$, as needed.

\end{proof}

In the countable case groups,
which contain a solution to any unimodular system of equations,
admit a more simple description.

\begin{coroll}
A countable trosion-free nilpotent group $G$ без кручения
contains a solution of any unimodular system of equations over itself
if and only if
$G$ is divisible.
\end{coroll}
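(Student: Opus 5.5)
By Theorem 2 (in its pro-Burnside form), it suffices to show that a countable torsion-free nilpotent group $G$ is pro-Burnside if and only if it is divisible.

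The easy direction is divisible $\Rightarrow$ pro-Burnside. If $G$ is divisible, then $G^{n}=G$ for every $n$, so every basic neighbourhood $\left\langle G^{n!}\right\rangle$ equals $G$. The topology is therefore indiscrete, and every sequence converges to any element. In particular $G$ is pro-Burnside. Alternatively, one can invoke directly the theorem of \cite{M25} on divisible nilpotent groups.

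For the converse, assume $G$ is countable and pro-Burnside, and argue by induction on the nilpotency class.

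\emph{Abelian case.} Write $A=B\oplus C$ with $B$ divisible and $C$ reduced. Lemma \ref{torsionfree} (really the reduction at the start of its proof) shows that $C$ is pro-Burnside and, being reduced, Hausdorff. Suppose $C\neq 0$. Then $C$ is a countable Hausdorff topological group with a countable base of open subgroups $n!C$. Each $n!C$ is closed and has finite or countable index. The pro-Burnside completeness makes $C$ a complete metrizable group, so the Baire category theorem applies.

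\begin{itemize}
\item If $C$ has an isolated point, then $\{0\}=\bigcap_n n!C$ is open, so $n!C=0$ for some $n$. Since $C$ is torsion-free, this forces $C=0$.
\item If $C$ has no isolated points, then $C$ is a countable union of singletons, each of which is nowhere dense, contradicting Baire.
\end{itemize}

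Hence $C=0$, and $A$ is divisible.

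\emph{Inductive step.} By Lemma \ref{probounded}, both $Z(G)$ and $G/Z(G)$ are pro-Burnside. Both are countable, and $G/Z(G)$ is torsion-free by \cite{Mal49}. By the induction hypothesis, $Z(G)$ and $G/Z(G)$ are divisible.

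It remains to lift divisibility. Given $g\in G$ and $n$, choose $h$ with $h^nZ(G)=gZ(G)$, so $g=h^nz$ with $z\in Z(G)$. Choose $w\in Z(G)$ with $w^n=z$. Since $w$ is central, $(hw)^n=h^nw^n=g$. Hence $G$ is divisible.

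The main obstacle is the Baire step in the abelian case: making rigorous that completeness in the sense of Proposition \ref{ProBurn} gives a complete metric. This holds because the countable chain of open subgroups $n!C$ defines an invariant ultrametric $d(x,y)=1/n$, where $n$ is maximal with $x-y\in n!C$. Once this metric is in place, the argument above goes through. Note that countability is used essentially only in this Baire step.

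Instead of Baire, one could argue directly. Enumerate $C=\{c_1,c_2,\dots\}$ and build a Cauchy sequence that avoids each $c_k$ at level $k$, using that each $n!C$ is nonzero (by torsion-freeness) and hence infinite. Its limit would differ from every $c_k$, which is impossible.
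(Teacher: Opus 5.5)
Your outer reduction via Theorem~\ref{TheoremTwo}, the abelian Baire argument and the lifting of roots through a central extension are all fine, and inducting on the class is a genuinely different route from the paper's. \textbf{The gap is in the inductive step,} where you invoke Lemma~\ref{probounded} to conclude that $Z(G)$ is pro-Burnside.

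That half of the lemma is proved by saying $Z(G)=\bigcap_g\{x\mid [g,x]=1\}$ is closed. This needs $\{1\}$ to be closed, i.e.\ the pro-Burnside topology to be Hausdorff, i.e.\ $G$ reduced. Your $G$ is an arbitrary countable pro-Burnside group, not known to be reduced. For non-reduced groups this half of the lemma is actually false:
\begin{itemize}
\item Take a surjective homomorphism $f\colon\mathfrak Z_p\to\mathbb Q$ vanishing on $\mathbb Z$. It exists because $\mathfrak Z_p/(\mathbb Q\cap\mathfrak Z_p)$ is a nonzero $\mathbb Q$-vector space.
\item Set $\omega((a,b),(a',b'))=f(a)f(b')-f(a')f(b)$, and let $G=\mathfrak Z_p^2\times\mathbb Q$ with $(u,c)(u',c')=(u+u',\,c+c'+\tfrac12\omega(u,u'))$.
\item This $G$ is torsion-free of class $2$ with $G^n=n\mathfrak Z_p^2\times\mathbb Q$. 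Its topology is pulled back from the complete $p$-adic topology on $\mathfrak Z_p^2$, so $G$ is pro-Burnside.
\item However $Z(G)=(\ker f)^2\times\mathbb Q$. Integers $a_k\to a$ with $f(a)\neq0$ give a Cauchy sequence $((a_k,0),0)$ in $Z(G)$. Its only candidate limit has first coordinate $a\notin\ker f$, so it has no limit in $Z(G)$.
\end{itemize}
For countable $G$ the conclusion you need is true, but only as a consequence of the corollary you are proving, so the step is unjustified as written. The $G/Z(G)$ half of the lemma is fine. The delicate point is here, not in the Baire step you flagged.

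\textbf{The missing idea is the paper's passage to the reduced case.} Put $H=\bigcap_n\langle G^{n!}\rangle$. The paper shows $H$ is divisible (hence the largest divisible subgroup) and that $G/H$ is torsion-free. Since $\langle (G/H)^{n!}\rangle=\langle G^{n!}\rangle/H$ and these intersect trivially, $G/H$ is Hausdorff, hence reduced. It is still pro-Burnside, because $H\subset\langle G^{n!}\rangle$ for all $n$, so Cauchy sequences lift.

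In $G/H$ the closedness argument is valid, and your method then needs no induction. $Z(G/H)$ is a countable, reduced, torsion-free, pro-Burnside abelian group, hence trivial by your Baire argument. So the nilpotent group $G/H$ is trivial, and $G=H$ is divisible.

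The paper instead applies Baire to $G/H$ itself (countable, metrizable, without isolated points when nontrivial) to see it is not complete when $G$ is not divisible. It then uses only the ``not pro-Burnside $\Rightarrow$ some unimodular system is unsolvable'' half of Theorem~\ref{TheoremTwo}, whose proof does not use Lemma~\ref{probounded}. For the divisible direction the paper cites \cite{M25}. Prefer that, as in your alternative, to routing through the other half of Theorem~\ref{TheoremTwo}, whose proof goes through Lemma~\ref{probounded}.
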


\begin{proof}
The case, in which $G$ is divisible, was studied in \cite{M25}.

Suppose now that $G$ is not divisible. By $s$ denote the nilpotency class of $G$.
Take the subgroup $H= \bigcap\limits_{n \in \mathbb N}\left\langle G^{n!} \right \rangle$.
As $\left\langle G^{(n!)^s!} \right\rangle \subset G^{n!}$, the subgroup $H$ is divisible and, in particular, not equal to $G$.
Suppose that $g^k \in H$. Then
$$
g^k \in \left\langle G^{(n!k)^s!} \right\rangle\subset G^{n!k},
$$
hence $g \in G^{n!}$ for any $n \in \mathbb N$, which implies $g \in H$.
Therefore, the quotient group $G/H$ is torsion-free.
The pro-Burnside topology on $G/H$ is metrizable
(since it is Hausdorff and first-countable), whereas
$G/H$ itself is non-trivial but at most countable.
By Baire category theorem, $G/H$
is not complete in its pro-Burnside topology,
hence, by Theorem \ref{TheoremTwo}, there is a unimodular system of equations over $G/H$
with no solutions in $G/H$.
Therefore, there is an analogous system of equations over $G$.

\end{proof}
 
We also provide an example, which shows that this statement is not true for non-countable groups.
\begin{example}
Take the additive group $\mathfrak Z_p$
of $p$-adic integer numbers.
The profinite and the pro-Burnside topologies on this group are the same,
as both admit a neighbourhood base of $0$ consisting of subgroups of the form
$n \mathfrak Z_p$
(some of these neighbourhoods may coincide).

It is known that $\mathfrak Z_p$ is complete in its profinite topology, hence it is pro-Burnside,
which means that any unimodular system of equations over $\mathfrak Z_p$ has a solution in $\mathfrak Z_p$.
However, $\mathfrak Z_p$ is not divisible, as $p\mathfrak Z_p \neq \mathfrak Z_p$.

\end{example}

\section{Open questions}\label{SectionQuestions}

The case, when the nilpotent group is neither periodic nor torsion-free,
remains unstudied. Such groups are called mixed.

\begin{question}
Which mixed abelian groups
contain solutions of all infinite unimodular systems of equations
over themselves?
\end{question}

\begin{question}
Which mixed nilpotent groups
contain solutions of all infinite unimodular systems of equations
over themselves?
\end{question}

\bibliography{biblioeng}

\end{document}